\def\line#1{\hbox to \hsize{#1\hfill}}
\newtheorem{prop}{Proposition}
\newtheorem{defi}{Definition}
\newtheorem{lemm}{Lemma}
\newtheorem{theo}{Theorem}
\newtheorem{maintheo}{Main Theorem}
\newtheorem{coro}{Corollary}
\newtheorem{exem}{Example}
\newcommand{\<}{\langle}
\renewcommand{\>}{\rangle}
\title{Almost Paracomplex Structures on 4-Manifolds}
\author{Nikos Georgiou}
\address{Nikos Georgiou\\
  Department of Mathematics\\
          Waterford Institute of Technology\\
          Waterford\\
          Co. Waterford\\
          Ireland.}
\email{ngeorgiou@wit.ie}
\author{Brendan Guilfoyle}
\address{Brendan Guilfoyle\\
          School of STEM\\
          Munster Technological University, Kerry\\
          Tralee\\
          Co. Kerry\\
          Ireland.}
\email{brendan.guilfoyle@mtu.ie}
\begin{document} 

\maketitle
\begin{abstract}
 
    Reflection in a line in Euclidean 3-space defines an almost paracomplex structure on the space of all oriented lines, isometric with respect to the canonical neutral Kaehler metric. Beyond Euclidean 3-space, the space of oriented geodesics of any real 3-dimensional space form admits both isometric and anti-isometric paracomplex structures.
   
    This paper considers the existence or otherwise of isometric and anti-isometric almost paracomplex structures $j$ on a pseudo-Riemannian 4-manifold $(M,g)$, such that $j$ is parallel with respect to the Levi-Civita connection of $g$. It is shown that if an isometric or anti-isometric almost paracomplex structure on a conformally flat manifold is parallel, then the scalar curvature of the metric must be zero. In addition, it is found that $j$ is parallel iff the eigenplanes are tangent to a pair of mutually orthogonal foliations by totally geodesic surfaces.
   
   The composition of a Riemannian metric with an isometric almost paracomplex structure $j$ yields a neutral metric $g'$.  It is proven that if $j$ is parallel, then $g$ is Einstein iff $g'$ is conformally flat and scalar flat.
   
    The vanishing of the Hirzebruch signature is found to be a necessary topological condition for a closed 4-manifold to admit an Einstein metric with a parallel isometric paracomplex structure. Thus, while the K3 manifold admits an Einstein metric with an isometric paracomplex structure, it cannot be parallel. The same holds true for certain connected sums of complex projective 2-space and its conjugate.
    
\end{abstract}

\tableofcontents

%%%%%%%%%%%%%%%%%%%%%%%%%%%%%%%%%%%%%%%%%%%%%%%%%%%%%%%%%%%%%%%%%%%%%%%%%%%%%%%%%%%%%%%%%%%%%%%%%%%%%%%%%%%%%%%%%%%%%%%%%%%%%%%%%%%%%%%%%
\section{Introduction and Results}\label{s:0}
%%%%%%%%%%%%%%%%%%%%%%%%%%%%%%%%%%%%%%%%%%%%%%%%%%%%%%%%%%%%%%%%%%%%%%%%%%%%%%%%%%%%%%%%%%%%%%%%%%%%%%%%%%%%%%%%%%%%%%%%%%%%%%%%%%%%%%%%%

The purpose of this paper is to explore isometric and anti-isometric almost paracomplex structures on conformally flat 4-manifolds and to see to what extent the examples of such structures on spaces of oriented geodesics and product manifolds are typical. 

In general, an almost paracomplex structure on a smooth 2$n$-dimensional manifold $M$ is an endomorphism of its tangent space at each point, one that squares to the identity (but is not the identity) and has a pair of $n$-dimensional eigenspaces with eigenvalues $\pm 1$. Conversely, the choice of a pair of transverse $n$-planes at each point on $M$ determines an almost paracomplex structure, unique up to an overall sign.

A pointwise choice of complimentary subspaces on a manifold is called an {\em almost product structure} \cite{naveira} \cite{yano} and clearly the product of two manifolds carries such a structure. However, in 4-dimensions, the analogy with almost complex structures means that a 2+2 almost product structure is generally referred to as an almost paracomplex structure. 

As with their better-known cousins, almost paracomplex structures are often associated with metrics, although the signature of the metric may be indefinite. Our approach is to consider paracomplex structures through their eigenplane distributions, as has been done in the complex case \cite{Pat}. Note that a parallel isometric (para)complex structure is often referred to as a (para)K\"ahler structure.

We start in Section \ref{s:1} by considering the reflection of an oriented line in ${\mathbb R}^3$ in a fixed line. By applying this to Jacobi fields along a fixed oriented line (infinitesimal variations of the line), one obtains a canonical almost paracomplex structure $J_1$ on the 4-manifold ${\mathbb L}({\mathbb R}^3)$ of all oriented lines.

The space ${\mathbb L}({\mathbb R}^3)$ admits a canonical neutral K\"ahler structure \cite{kahlermetric} $({\mathbb G},J_0,\Omega_0)$ consisting of a metric of neutral signature $(2,2)$, a complex structure and a symplectic 2-form, respectively, satisfying the conditions
\[
{\mathbb G}(J_0\cdot,J_0\cdot)={\mathbb G}(\cdot,\cdot) \qquad \qquad\qquad
{\mathbb G}(J_0\cdot,\cdot)=\Omega_0(\cdot,\cdot).
\]
If the first of these conditions holds, we say that $J_0$ is {\em isometric} with respect to ${\mathbb G}$. The almost paracomplex structure $J_1$ induced by reflection in a line is 
 {\em anti-isometric}: ${\mathbb G}(J_1\cdot,J_1\cdot)=-{\mathbb G}(\cdot,\cdot)$.

Our first result in Section \ref{s:1} is:

\vspace{0.1in}
\begin{maintheo}\label{t:1}
The space of oriented lines of Euclidean 3-space admits a commuting triple $(J_0,J_1,J_2)$ of a complex structure, an almost paracomplex structure and an almost complex structure, respectively, satisfying $J_2=J_0J_1$.
The complex structure $J_0$ is isometric, while $J_1$ and $J_2$ are anti-isometric. Only $J_0$ is parallel w.r.t. ${\mathbb G}$, and only $J_0$ is integrable. 

Composing the neutral metric ${\mathbb G}$ with the (para)complex structures $J_0,J_1,J_2$ yields closed 2-forms $\Omega_0$ and $\Omega_1$, and a conformally flat, scalar flat, neutral metric  $\tilde{\mathbb G}$, respectively. 

The neutral 4-manifolds $({\mathbb L}({\mathbb R}^3),{\mathbb G})$ and $({\mathbb L}({\mathbb R}^3),\tilde{\mathbb G})$ are isometric. Only $J_0$ is parallel w.r.t. $\tilde{\mathbb G}$.
\end{maintheo}
\vspace{0.1in}

Section \ref{s:5} reconsiders oriented geodesics, this time in general 3-dimensional spaces of constant curvature. Here we find examples of non-flat parallel isometric and anti-isometric paracomplex structures. In particular, it is shown that the Hodge star operator, through the Pl\"ucker embedding, gives rise to a natural (para)complex structure on these geodesic spaces. Section \ref{s:5} also discusses other non-trivial examples that arise from the product of two surfaces.

In Sections \ref{s:2} and  \ref{s:3} the following geometric characterizations are given: an almost paracomplex structure on a pseudo-Riemannian 4-manifold is isometric iff its eigenplanes are orthogonal, while it is anti-isometric iff its eigenplanes are totally null. Thus, there are no anti-isometric paracomplex structures when the metric is definite, but isometric paracomplex structures can exist for both definite and indefinite metrics. 

The characterisation can be suitably generalised to higher dimensions, but this paper will stay exclusively in dimension 4.
 
In Sections \ref{s:2} and  \ref{s:3} it is proven that:

\vspace{0.1in}
\begin{maintheo}\label{t:2}
A  conformally flat neutral metric on a 4-manifold that admits a parallel anti-isometric or isometric almost paracomplex structure has zero scalar curvature.
\end{maintheo}
\vspace{0.1in}
The proof shows that the vanishing of the scalar curvature arises as a consistency condition for parallel anti-isometric or isometric almost paracomplex structures. In the neutral case, this vanishing is equivalent to the conformal factor of the metric satisfying the ultrahyperbolic equation \cite{CG}.

Section \ref{s:4} reformulates the parallel condition for an isometric almost paracomplex structure in terms of the first order invariants of the eigenplane distributions:
\vspace{0.1in}
\begin{maintheo}\label{t:3}
Let $j$ be an isometric almost paracomplex structure on a pseudo-Riemannian 4-manifold. Then $j$ is parallel iff the eigenplane distributions are tangent to a pair of mutually orthogonal foliations by totally geodesic surfaces.
\end{maintheo}
\vspace{0.1in}
In particular, the eigenplane distributions are integrable in the sense of Frobenius. Since in common parlance the integrability of the eigen-distribution allows one to drop the term {\em almost}, we can say {\em parallel paracomplex structure} rather than {\em parallel almost paracomplex structure.}

Section \ref{s:6} establishes a duality between Riemannian Einstein 4-manifolds and conformally flat neutral 4-manifolds in the presence of a parallel isometric paracomplex structure.  We prove:

\vspace{0.1in}
\begin{maintheo}\label{t:4}
Let $(M,g)$ be a Riemannian $4$-manifold endowed with a parallel isometric paracomplex structure $j$, and let the associated neutral metric be $g'(\cdot,\cdot)=g(j\cdot,\cdot)$. Then, $g'$ is locally conformally flat if and only if $g$ is Einstein.
\end{maintheo}
\vspace{0.1in}
Thus given a Riemannian Einstein metric $g$ on a 4-manifold $M$ admitting a parallel isometric paracomplex structure $j$, one can construct a conformally flat, scalar flat neutral metric $g'$, and vice versa.

In the final section this transformation is applied to the Chern-Gauss-Bonnet formula for the Euler characteristic $\chi(M)$, together with the integral expression for the Hirzebruch signature $\tau(M)$. We obtain global obstructions to parallel isometric paracomplex structures on closed conformally flat neutral 4-manifolds:

\vspace{0.1in}
\begin{maintheo}\label{t:5}
Let $(M,g')$ be a closed, conformally flat, scalar flat, neutral 4-manifold. If $g'$ admits a parallel isometric paracomplex structure, then 
\[
\tau(M)=0\qquad\qquad{\mbox{ and }}\qquad\qquad\chi(M)\geq 0.
\]

If, moreover, the Ricci tensor of $g'$ has negative norm $|Ric(g')|^2\leq 0$, then $M$ admits a flat Riemannian metric.
\end{maintheo}
\vspace{0.1in}

Using the opposite construction, global obstructions to parallel isometric paracomplex structures on closed Einstein 4-manifolds emerge:

\vspace{0.1in}
\begin{maintheo}\label{t:6}
Let $(M,g)$ be a closed Riemannian Einstein 4-manifold. 

If $g$ admits a parallel isometric paracomplex structure, then $\tau(M)=0$. 
\end{maintheo}
\vspace{0.1in}
As a corollary, the $K3$ 4-manifold, as well as the 4-manifolds ${\mathbb{ C}}P^2\#k\overline{\mathbb CP}^2$ for $k=3,5,7$, admit Riemannian Einstein metrics and isometric almost paracomplex structures, but these almost paracomplex structures cannot be parallel.

\vspace{0.1in}

%%%%%%%%%%%%%%%%%%%%%%%%%%%%%%%%%%%%%%%%%%%%%%%%%%%%%%%%%%%%%%%%%%%%%%%%%%%%%%%%%%%%%%%%%%%%%
\section{Almost Paracomplex Structure on Oriented Line Space}\label{s:1}
%%%%%%%%%%%%%%%%%%%%%%%%%%%%%%%%%%%%%%%%%%%%%%%%%%%%%%%%%%%%%%%%%%%%%%%%%%%%%%%%%%%%%%%%%%%%%
In this section the canonical almost paracomplex structure on the space of oriented lines of Euclidean 3-space is defined.

\subsection{Oriented line space}
%%%%%%%%%%%%%%%%%%%%%%%%%%%%%%%%%%%%%%%%%%%%%%%%%%%%%%%%%%%%%%%%%%%%%%%%%%%%%%%%%%%%%%%%%%%%%

The space ${\mathbb L}({\mathbb R}^3)$ of oriented lines in ${\mathbb R}^3$ is well known to be modelled on the total space  $TS^2$ of the tangent bundle of the 2-sphere \cite{kahlermetric}. 

Taking the usual holomorphic coordinate $\xi$ on $S^2$ obtained by stereographic projection from the south pole, one can construct holomorphic coordinates $(\xi,\eta)$ on ${\mathbb L}({\mathbb R}^3)$ by identifying 
\[
(\xi,\eta)\leftrightarrow \eta\frac{\partial}{\partial\xi}+\bar{\eta}\frac{\partial}{\partial\bar{\xi}}\in T_\xi S^2.
\]
Thus $\xi\in S^2$ is the direction of the oriented line and the complex number $\eta$ represents the orthogonal displacement of the line from the origin.

\vspace{0.1in}
\begin{theo}\cite{kahlermetric}\label{t:gk}
The space ${\mathbb L}({\mathbb R}^3)$ of oriented lines of ${\mathbb R}^3$ admits a canonical metric ${\mathbb G}$ that is invariant under the Euclidean group acting on lines. The metric is of neutral signature (2,2), is conformally flat and scalar flat, but not Einstein. 

It can be supplemented by a complex structure $ J_0$ and symplectic structure $\omega$, so that $({\mathbb L}({\mathbb R}^3),{\mathbb G}, J_0,\omega)$ is a neutral K\"ahler 4-manifold.
\end{theo}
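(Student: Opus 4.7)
The plan is to work throughout in the holomorphic coordinates $(\xi,\eta)$ on $\mathbb{L}(\mathbb{R}^3)\cong TS^2$ introduced in the excerpt, where $\xi$ is the direction of the oriented line obtained by stereographic projection from the south pole and $\eta$ encodes the complex orthogonal displacement of the line from the origin. First I would parameterize an oriented line explicitly as $x(r,\xi,\eta)=p(\xi,\eta)+r\,\hat u(\xi)$, where $\hat u(\xi)$ is the unit vector determined by $\xi$ and $p(\xi,\eta)\perp\hat u$ is the foot of the perpendicular from the origin. Tangent vectors at a given line correspond to Jacobi fields along it, and may be read off as derivatives of $x$ with respect to $(\xi,\bar\xi,\eta,\bar\eta)$.

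Next I would define $\mathbb{G}$ as the pseudo-Hermitian pairing of Jacobi fields arising from symplectic reduction of $T^*\mathbb{R}^3$ by the geodesic flow, which in these coordinates takes the explicit form
\[
\mathbb{G} \;=\; 2\,\mathrm{Im}\!\left(\frac{d\eta\otimes d\bar\xi}{(1+\xi\bar\xi)^2}\right) \;+\; \text{terms in } d\xi\otimes d\bar\xi,
\]
derived from a Kähler potential. The signature $(2,2)$ is read off from the off-diagonal leading term by diagonalizing at a point. Invariance under the Euclidean group is checked by writing the action explicitly: rotations act as Möbius transformations on $\xi$ together with the appropriate transformation of $\eta$, while translations shift $\eta$ by a Killing potential linear in the translation vector; in each case $\mathbb{G}$ is preserved by direct computation.

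For the Kähler structure, take $J_0$ to be the canonical almost complex structure of $TS^2$ (multiplication by $i$ on $(\xi,\eta)$). Integrability is automatic since $(\xi,\eta)$ are holomorphic coordinates, and the isometric compatibility $\mathbb{G}(J_0\cdot,J_0\cdot)=\mathbb{G}(\cdot,\cdot)$ follows because $\mathbb{G}$ is of type $(1,1)$. Setting $\omega:=\mathbb{G}(J_0\cdot,\cdot)$ then produces a real $(1,1)$-form, and closedness $d\omega=0$ follows from the existence of a Kähler potential; consequently $\nabla J_0=0$. Finally, a direct curvature computation with the explicit $\mathbb{G}$ above yields vanishing Weyl tensor and vanishing scalar curvature, while the trace-free Ricci is nonzero, so the metric is conformally flat and scalar flat but not Einstein.

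The main obstacle I expect is the curvature computation in neutral signature. Because $*^2=+1$ on $2$-forms in signature $(2,2)$, the self-dual/anti-self-dual splitting behaves differently than in the Riemannian case, so confirming conformal flatness requires showing both halves of the Weyl tensor vanish with careful sign bookkeeping. The Kähler condition constrains one half of Weyl to a simple form depending only on the scalar curvature, which together with $R=0$ disposes of that half; the opposite half then requires an explicit Christoffel-based computation in the $(\xi,\eta)$ coordinates, which is mechanical but is where most of the labour of the proof sits.
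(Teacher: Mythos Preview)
This theorem is not proved in the present paper at all: it is quoted verbatim from the cited reference \cite{kahlermetric}, and the text following the statement simply records the explicit coordinate forms of $\mathbb{G}$ and $J_0$ (equations (\ref{e:metric}) and (\ref{e:j0})) together with the Jacobi-field interpretation of $J_0$. There is therefore no ``paper's own proof'' to compare against.

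Your outline is a reasonable reconstruction of what such a proof would involve, and is broadly in line with the original reference: identify $\mathbb{L}(\mathbb{R}^3)$ with $TS^2$, write the metric in the $(\xi,\eta)$ coordinates, verify Euclidean invariance by the explicit action of rotations (M\"obius on $\xi$) and translations (affine shift of $\eta$), take $J_0$ to be the tautological complex structure, and compute the curvature directly. One small comment: your remark that the K\"ahler condition forces one half of the Weyl tensor to be determined by the scalar curvature is the efficient way to handle conformal flatness here, and indeed in Proposition~\ref{p:conf} the paper exhibits an explicit conformal coordinate system, which is an alternative (and quicker) route to the same conclusion than a full Christoffel computation.
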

\vspace{0.1in}

In terms of the local coordinates $(\xi,\eta)$ the neutral metric is
\begin{equation}\label{e:metric}
ds^2=4(1+\xi\bar{\xi})^{-2}{\mathbb{I}}\mbox{m}\left(d\bar{\eta} d\xi+\frac{2\bar{\xi}\eta}{1+\xi\bar{\xi}}d\xi d\bar{\xi}\right),
\end{equation}
and the action of the complex structure is:
\begin{equation}\label{e:j0}
 J_0\left(\frac{\partial}{\partial\xi}\right)=i\frac{\partial}{\partial\xi}
\qquad\qquad
 J_0\left(\frac{\partial}{\partial\eta}\right)=i\frac{\partial}{\partial\eta}.
\end{equation}
The complex structure can be defined as the rotation of Jacobi fields along a fixed oriented line through 90$^o$, as follows. 

Given a fixed oriented line $\gamma_0$, the tangent space to the space of oriented lines at $\gamma_0$ can be identified with the vector fields along the line that are orthogonal to it and satisfy the Jacobi equation: 
\[
\frac{D^2}{dr^2}V=0,
\]
$r$ being an affine parameterisation of the oriented line. Thus the solutions can be seen as infinitesimal nearby lines with a linear dependence
\[
V=V_0+rV_1,
\]
$V_0$ and $V_1$ being constant orthogonal vectors along $\gamma_0$,

The link between ${\mathbb L}({\mathbb R}^3)$ and ${\mathbb R}^3$ can be made by the map $\Phi:{\mathbb L}({\mathbb R}^3)\times{\mathbb R}\rightarrow{\mathbb R}^3$ which takes an oriented line and a number, to the point in ${\mathbb R}^3$ on the line that lies the given distance from the point on the line closest to the origin. In coordinates $(\xi,\eta)$ on ${\mathbb L}({\mathbb R}^3)$ and $(x_1,x_2,x_3)$ on ${\mathbb R}^3$, $\Phi((\xi,\eta),r)$ can be written 
\begin{equation}\label{e:minit}
z=x_1+ix_2=\frac{2(\eta-\xi^2\bar{\eta})}{(1+\xi\bar{\xi})^2}+\frac{2\xi}{1+\xi\bar{\xi}}r
\qquad
x_3=\frac{-2(\bar{\xi}\eta+\xi\bar{\eta})}{(1+\xi\bar{\xi})^2}+\frac{1-\xi\bar{\xi}}{1+\xi\bar{\xi}}r.
\end{equation}
To explicitly see the Jacobi fields along the fixed oriented line $\gamma_0$, introduce the follow null frame along $\gamma_0$
\[
e_0=\frac{2\xi_0}{1+\xi_0\bar{\xi}_0}\frac{\partial}{\partial z}+\frac{2\bar{\xi}_0}{1+\xi_0\bar{\xi}_0}\frac{\partial}{\partial \bar{z}}+\frac{1-\xi_0\bar{\xi}_0}{1+\xi_0\bar{\xi}_0}\frac{\partial}{\partial x_3}
\]
\[
e_+=\frac{\sqrt{2}}{1+\xi_0\bar{\xi}_0}\frac{\partial}{\partial z}-\frac{\sqrt{2}\bar{\xi}_0^2}{1+\xi_0\bar{\xi}_0}\frac{\partial}{\partial \bar{z}}-\frac{\sqrt{2}\bar{\xi}_0}{1+\xi_0\bar{\xi}_0}\frac{\partial}{\partial x_3}
\]
\[
e_-=-\frac{\sqrt{2}{\xi}_0^2}{1+\xi_0\bar{\xi}_0}\frac{\partial}{\partial z}+\frac{\sqrt{2}}{1+\xi_0\bar{\xi}_0}\frac{\partial}{\partial \bar{z}}-\frac{\sqrt{2}{\xi}_0}{1+\xi_0\bar{\xi}_0}\frac{\partial}{\partial x_3},
\]
where $e_0$ is a unit vector along pointing along $\gamma_0$ and the unit real vectors $e_1, e_2$ defined by $e_\pm=e_1\pm i e_2$ are mutually orthogonal and orthogonal to $\gamma_0$.

Now consider the derivative $D\Phi:T_{((\xi,\eta),r)}{\mathbb L}({\mathbb R}^3)\rightarrow T_{(x_1,x_2,x_3)}{\mathbb R}^3\rightarrow $, it can be easily compute that
\[
D\Phi\left|_{((\xi,\eta),r)}\right.\left(\frac{\partial}{\partial\xi}\right)=\left(r-\frac{2\bar{\xi}\eta}{1+\xi\bar{\xi}}\right)\frac{\sqrt{2}}{1+\xi\bar{\xi}}\;e_+-\frac{2\bar{\eta}}{(1+\xi\bar{\xi})^2}\;e_0
\]
\[
D\Phi\left|_{((\xi,\eta),r)}\right.\left(\frac{\partial}{\partial\eta}\right)=\frac{\sqrt{2}}{1+\xi\bar{\xi}}\;e_+.
\]
Projecting perpendicular to $\gamma_0$ gives the identification of the tangent space to ${\mathbb L}({\mathbb R}^3)$ at $\gamma_0$ with the orthogonal Jacobi fields along the line. Now the rotation of Jacobi fields about the line, namely 
\[
j_0(e_+)=ie_+ \qquad j_0(e_-)=-ie_-,
\]
clearly induces the almost complex structure $J_0$ in equation (\ref{e:j0}). 

The almost complex structure $J_0$ is {\em integrable} in that the Nijinhuis tensor of $J_0$ vanishes - a fact first noted in the modern literature by Hitchin \cite{hitch0}, but dating back to Weierstrass \cite{Weier}. Here the Nijinhuis tensor of an almost (para)complex structure $J$ is defined
\[
N_{ij}^k=J_i^l\partial_lJ_j^k-J_j^l\partial_lJ_i^k-J_l^k(\partial_iJ_j^l-\partial_jJ_i^l),
\]
and when it vanishes $J$ is said to be a {\it (para)complex structure}. 

Moreover,  $J_0$ is {\em parallel} with respect to the Levi-Civita connection of ${\mathbb G}$:
\[
\nabla_{\mathbb G} J_0=0.
\]
In the case where ${\mathbb G}$ is Riemannian and $J_0$ an isometric complex structure, this condition is referred to as the K\"ahler condition. When ${\mathbb G}$ is pseudo-Riemannian and $J_0$ an anti-isometric paracomplex structure, it is referred to as the para-K\"ahler condition. 

\vspace{0.1in}
\subsection{Reflection in a line}
%%%%%%%%%%%%%%%%%%%%%%%%%%%%%%%%%%%%%%%%%%%%%%%%%%%%%%%%%%%%%%%%%%%%%%%%%%%%%%%%%%%%%%%%%%%%%

Consider then the {\em reflection} of an oriented line $\gamma$ in a fixed line $\gamma_0$ (the orientation of $\gamma_0$ is immaterial). This gives rise to a new oriented line $\gamma'$ and can be viewed as a mapping ${\mathcal R}_{\gamma_0}:{\mathbb L}({\mathbb R}^3)\rightarrow {\mathbb L}({\mathbb R}^3)$.

\vspace{0.1in}
\begin{lemm}\label{l:1}
In the special case where $\gamma_0$ is the $x_3-$axis, the map in holomorphic coordinates is simply
\[
{\mathcal R}_0(\xi,\eta)=\left(\frac{1}{\bar{\xi}},\frac{\bar{\eta}}{\bar{\xi}^2}\right).
\]
\end{lemm}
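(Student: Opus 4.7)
My plan is to verify the formula by a direct coordinate computation using the parameterization $\Phi$ from equation \eqref{e:minit}.

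First, I recall that the oriented line $\gamma$ with coordinates $(\xi,\eta)$ is the image of the map $r\mapsto\Phi((\xi,\eta),r)$, and that reflection in the $x_3$-axis is the isometry $\sigma:\mathbb{R}^3\to\mathbb{R}^3$ sending $(x_1,x_2,x_3)\mapsto(-x_1,-x_2,x_3)$, i.e., $(z,x_3)\mapsto(-z,x_3)$. Thus $\mathcal{R}_0(\gamma)$ is the oriented line whose pointset is $\sigma(\gamma)$.

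Next, I set $(\xi',\eta'):=(1/\bar\xi,\bar\eta/\bar\xi^2)$ and compute $\Phi((\xi',\eta'),r')$ using the algebraic identities
\[
1+|\xi'|^2=\frac{1+|\xi|^2}{|\xi|^2},\qquad\frac{2\xi'}{1+|\xi'|^2}=\frac{2\xi}{1+|\xi|^2},\qquad\frac{1-|\xi'|^2}{1+|\xi'|^2}=-\frac{1-|\xi|^2}{1+|\xi|^2},
\]
together with
\[
\eta'-(\xi')^2\overline{\eta'}=-\frac{\eta-\xi^2\bar\eta}{|\xi|^4},\qquad \overline{\xi'}\,\eta'+\xi'\overline{\eta'}=\frac{\bar\xi\eta+\xi\bar\eta}{|\xi|^4}.
\]
Substituting these into \eqref{e:minit} gives
\[
z'(r')=-\frac{2(\eta-\xi^2\bar\eta)}{(1+|\xi|^2)^2}+\frac{2\xi}{1+|\xi|^2}\,r',
\]
\[
x_3'(r')=-\frac{2(\bar\xi\eta+\xi\bar\eta)}{(1+|\xi|^2)^2}-\frac{1-|\xi|^2}{1+|\xi|^2}\,r'.
\]

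Finally, comparing with $\sigma(\Phi((\xi,\eta),r))=(-z(r),x_3(r))$, the reparameterization $r'=-r$ identifies the two parameterizations, yielding $\Phi((\xi',\eta'),\cdot)=\sigma\circ\Phi((\xi,\eta),\cdot)$ as oriented lines in $\mathbb{R}^3$, which establishes the claim. The main obstacle is purely computational bookkeeping of the $\bar\xi$-conjugate identities; no conceptual difficulty arises, and the parameter-reversal $r'=-r$ simply reflects the fact that the reflection in $\gamma_0$ induces the opposite orientation on the image line (consistent with the involutivity $\mathcal{R}_0^2=\mathrm{id}$).
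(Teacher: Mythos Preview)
Your proof is correct and follows exactly the approach of the paper: the paper's proof simply states that one substitutes $((\xi,\eta),r)\mapsto(\mathcal{R}_0(\xi,\eta),-r)$ into equation~\eqref{e:minit} and checks that the induced map on $\mathbb{R}^3$ is $(x_1,x_2,x_3)\mapsto(-x_1,-x_2,x_3)$, and you have carried out precisely this computation in full detail, including the same reparameterization $r'=-r$.
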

\begin{proof}
This can be easily checked by substitution of the transformation $((\xi,\eta),r)\rightarrow({\mathcal R}_0(\xi,\eta),-r)$ into equations (\ref{e:minit}) and noting that the induced transformation is $(x_1,x_2,x_3)\rightarrow(-x_1,-x_2,x_3)$, as claimed.
\end{proof}
\vspace{0.1in}
Given a Jacobi field along a fixed oriented line $\gamma_0$, reflection in $\gamma_0$ yields another Jacobi field and hence yields an endomorphism $J_1$ of the tangent space to ${\mathbb L}({\mathbb R}^3)$ at $\gamma_0$. The map $J_1$ squares to the identity, although it is not equal to the identity, and it has two 2-dimensional eigenspaces. Thus $J_1$ is an {\em almost paracomplex structure} on ${\mathbb L}({\mathbb R}^3)$. 

\vspace{0.1in}
\begin{prop}
Reflection defines the almost paracomplex structure 
\[
J_1\left(\frac{\partial}{\partial\xi}\right)= \frac{\partial}{\partial\xi}+\frac{4\bar{\xi}\eta}{1+\xi\bar{\xi}}\frac{\partial}{\partial\eta} \qquad\qquad
J_1\left(\frac{\partial}{\partial\eta}\right)= -\frac{\partial}{\partial\eta}.
\]
This is anti-isometric: ${\mathbb G}(J_1\cdot,J_1\cdot)=-{\mathbb G}(\cdot,\cdot)$, but is neither parallel  with respect to  ${\mathbb G}$ nor integrable.
\end{prop}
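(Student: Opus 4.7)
The plan is to derive the coordinate formula for $J_1$ from the reflection construction, then verify each stated property by direct computation in the $(\xi,\eta)$ chart.

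For the derivation: the reflection-induced construction is Euclidean-equivariant, and the Euclidean group $E(3)$ acts transitively on $\mathbb{L}(\mathbb{R}^3)$, so it suffices to compute $J_1$ at a single distinguished oriented line and transport elsewhere. At $\gamma_0 = x_3$-axis (coordinates $(\xi,\eta)=(0,0)$), the Jacobi-field decomposition $V = V_0 + rV_1$ into translational and rotational parts is respected by the reflection of Lemma \ref{l:1} (which, as explained in its proof, combines the $\mathbb{R}^3$-reflection with the parameter flip $r \to -r$); since the $\mathbb{R}^3$-reflection negates the perpendicular components, the combined action negates $V_0$ and preserves $V_1$, placing $\partial_\eta$ in the $-1$ eigenspace and $\partial_\xi$ in the $+1$ eigenspace. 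Propagating to a general $(\xi,\eta)$ via a Euclidean isometry---equivalently, differentiating the appropriate conjugate of $\mathcal{R}_0$---produces the off-diagonal correction $\frac{4\bar{\xi}\eta}{1+\xi\bar{\xi}}\partial_\eta$ in $J_1\partial_\xi$, which reflects the mixing of rotation and translation in the coordinate basis away from the origin.

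The algebraic verifications are direct. Computing $J_1^2\partial_\xi = J_1\bigl(\partial_\xi + \frac{4\bar{\xi}\eta}{1+\xi\bar{\xi}}\partial_\eta\bigr) = \partial_\xi + \frac{4\bar{\xi}\eta}{1+\xi\bar{\xi}}\partial_\eta - \frac{4\bar{\xi}\eta}{1+\xi\bar{\xi}}\partial_\eta = \partial_\xi$ and $J_1^2\partial_\eta = \partial_\eta$ gives $J_1^2 = I$; since the $\pm 1$ eigenspaces are each two-dimensional, $J_1 \neq \pm I$ and so $J_1$ is an almost paracomplex structure. For anti-isometry, expand $\mathbb{G}(J_1 X, J_1 Y)$ on each pair of coordinate basis vectors using (\ref{e:metric}); the off-diagonal term $\frac{4\bar{\xi}\eta}{1+\xi\bar{\xi}}$ in $J_1\partial_\xi$ interacts precisely with the metric's off-diagonal piece $\frac{2\bar{\xi}\eta}{1+\xi\bar{\xi}}$ to produce the overall sign flip $\mathbb{G}(J_1\cdot, J_1\cdot) = -\mathbb{G}(\cdot,\cdot)$.

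For non-parallelism, compute $\nabla J_1$ using the Christoffel symbols of $\mathbb{G}$ (which follow from (\ref{e:metric}) or from the neutral K\"ahler structure); the partial derivatives of $\frac{4\bar{\xi}\eta}{1+\xi\bar{\xi}}$ and the Christoffel contributions do not cancel, so $\nabla J_1 \neq 0$ at any line with $\eta \neq 0$---for instance, by checking $(\nabla_{\partial_{\bar{\eta}}}J_1)\partial_\xi$. For non-integrability, apply the coordinate formula for the Nijenhuis tensor to the pair $(\partial_\xi,\partial_{\bar{\xi}})$; this picks up $\partial_{\bar{\xi}}$-derivatives of the off-diagonal coefficient that do not cancel, giving a non-vanishing component. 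The main obstacle is the formula derivation itself, since Lemma \ref{l:1}'s map $\mathcal{R}_0$ is singular at the fixed oriented line $(0,0)$, forcing careful use of Euclidean equivariance or a chart change; once the formula is in hand, the remaining verifications reduce to direct mechanical computation.
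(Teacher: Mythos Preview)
Your proposal is correct, and for the anti-isometry, non-parallelism, and non-integrability you actually supply more detail than the paper, which simply asserts ``a straightforward computation shows''. The one substantive difference is in how you derive the formula for $J_1$.

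You compute $J_1$ at the origin $(\xi,\eta)=(0,0)$ from the Jacobi-field splitting $V=V_0+rV_1$ and then invoke Euclidean equivariance to transport to a general point, noting yourself that the singularity of $\mathcal R_0$ at the fixed line makes this the delicate step. The paper avoids this obstacle entirely: it works directly at a \emph{general} $(\xi,\eta)$ using the explicit derivative $D\Phi|_{((\xi,\eta),r)}$, which expresses $\partial_\xi$ and $\partial_\eta$ as Jacobi fields that are affine in $r$ with coefficients depending on $(\xi,\eta)$. Applying the reflection $j_1(e_\pm)=-e_\pm$ together with the parameter flip $r\to -r$ (from Lemma~\ref{l:1}) to these explicit expressions yields the off-diagonal term $\frac{4\bar\xi\eta}{1+\xi\bar\xi}$ immediately, with no transport step and no singularity to negotiate. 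Both arguments rest on the same geometric mechanism---reflection negates the translational part and preserves the rotational part of the Jacobi field---but the paper's route is shorter and self-contained, while yours trades a local computation for a global symmetry argument whose execution you leave as a black box.
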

\begin{proof}
Adopting the earlier identification of Jacobi fields, the paracomplex action is
\[
j_1(e_+)=-e_+ \qquad j_1(e_-)=-e_-
\]
and so
\begin{eqnarray}
D\Phi\;\left|_{(\xi,\eta)}\right.J_1\left(\frac{\partial}{\partial\xi}\right)&=-\left(-r-\frac{2\bar{\xi}\eta}{1+\xi\bar{\xi}}\right)\frac{\sqrt{2}}{1+\xi\bar{\xi}}\;e_+-\frac{2\bar{\eta}}{(1+\xi\bar{\xi})^2}\;e_0\nonumber\\
&=D\Phi\;\left|_{(\xi,\eta)}\right.\left(\frac{\partial}{\partial\xi}\right)+\frac{4\sqrt{2}\bar{\xi}\eta}{(1+\xi\bar{\xi})^2}e_+\nonumber
\end{eqnarray}

\[
D\Phi\left|_{(\xi,\eta)}\right.J_1\left(\frac{\partial}{\partial\eta}\right)=-\frac{\sqrt{2}}{1+\xi\bar{\xi}}\;e_+.
\]
The last equation implies that 
\[
J_1\left(\frac{\partial}{\partial\eta}\right)= -\frac{\partial}{\partial\eta},
\]
and the first equation of the Proposition follows.

A straightforward computation shows that $J_1$ is neither parallel nor integrable.

\end{proof}
\vspace{0.1in}

The canonical complex structure $J_0$ on ${\mathbb L}({\mathbb R}^3)$ combines with the almost paracomplex structure $J_1$ to form $J_2=J_0J_1$. 

\vspace{0.1in}
\noindent{\bf Proof of Main Theorem} \ref{t:1}.

By definition, $J_2$ is an almost complex structure
\[
J_2\left(\frac{\partial}{\partial\xi}\right)= i\frac{\partial}{\partial\xi}+\frac{4i\bar{\xi}\eta}{1+\xi\bar{\xi}}\frac{\partial}{\partial\eta} \qquad\qquad
J_2\left(\frac{\partial}{\partial\eta}\right)= -i\frac{\partial}{\partial\eta},
\]
and it is easily checked that it commutes with $J_0$ and $J_1$.

From the local expression (\ref{e:metric}) of the metric and of the (para)complex structures one computes that
\[
{\mathbb G}(J_0\cdot,J_0\cdot)={\mathbb G}(\cdot,\cdot) \qquad\qquad {\mathbb G}(J_1\cdot,J_1\cdot)=-{\mathbb G}(\cdot,\cdot) \qquad\qquad {\mathbb G}(J_2\cdot,J_2\cdot)=-{\mathbb G}(\cdot,\cdot),
\]
as claimed. 

The integrability of $J_0$ is well-known \cite{hitch0} and after a short computation the Nijinhuis tensors for $J_1$ and $J_2$ are found not to vanish. Similarly, only the covariant derivative of $J_0$ vanishes identically.

Define
\[
\Omega_0(X,Y)={\mathbb G}(J_0X,Y) \qquad \Omega_1(X,Y)={\mathbb G}(J_1X,Y) \qquad
\tilde{\mathbb G}(X,Y)={\mathbb G}(J_2X,Y). 
\]
The metrics ${\mathbb G}$ and $\tilde{\mathbb G}$ are easily seen to be isometric via the map $(\xi,\eta)\mapsto(\xi,i\eta)$.
 
\qed

\vspace{0.1in}

\subsection{Conformal representation}
%%%%%%%%%%%%%%%%%%%%%%%%%%%%%%%%%%%%%%%%%%%%%%%%%%%%%%%%%%%%%%%%%%%%%%%%%%%%%%%%%%%%%%%%%%%%%

A pseudo-Riemannian manifold $(M,g)$ is {\em conformally flat} if it can be covered by local coordinate systems $(X_1,X_2,...,X_n)$ such that the metric is 
\[
ds^2=\Omega^2\sum_{k=1}^n \pm dX^2_k,
\]
where $\Omega$ is a non-zero function defined on the coordinate neighbourhood. Equivalently, the Weyl curvature tensor of the metric vanishes at every point.

The neutral metric ${\mathbb G}$ on the space of oriented lines is conformally flat and therefore such a local conformal coordinate system and conformal factor must exist. To find them, recall that a line in ${\mathbb R}^3$ can be uniquely determined by two distinct points $(s_1,s_2,s_3)$ and $(t_1,t_2,t_3)$ on the line. Pl\"ucker coordinates form the sextet $(p_1,p_2,p_3,q_1,q_2,q_3)$ defined by \cite{fjohn}
\[
p_1=s_2t_3-t_2s_3 \qquad p_2=s_3t_1-t_3s_1 \qquad p_3=s_1t_2-t_1s_2
\]
\[
q_1=s_1-t_1 \qquad q_2=s_2-t_2 \qquad q_3=s_3-t_3.
\]
Conformal coordinates on line space are then given by
\[
X_1=\frac{p_2+q_2}{q_3} \quad X_2=\frac{-p_1-q_1}{q_3} \quad X_3=\frac{p_2-q_2}{q_3} \quad X_4=\frac{-p_1+q_1}{q_3}.
\]
These originate in the representation of lines in ${\mathbb R}^3$ in terms of the Grassmannian of oriented 2-planes in ${\mathbb R}^4$.

As a  consequence:
\vspace{0.1in}
\begin{prop}\label{p:conf}\cite{CG}
For complex coordinates $(\xi,\eta)$ on ${\mathbb L}({\mathbb R}^3)$, over the upper hemisphere $|\xi|^2<1$ the conformal coordinates $(Z_1=X_1+iX_2,Z_2=X_3+iX_4)$ are
\begin{equation}\label{e:confco1}
Z_1=\frac{2}{1-\xi^2\bar{\xi}^2}\left(\eta+\xi^2\bar{\eta}-i(1+\xi\bar{\xi})\xi\right),
\end{equation}
\begin{equation}\label{e:confco2}
Z_2=\frac{2}{1-\xi^2\bar{\xi}^2}\left(\eta+\xi^2\bar{\eta}+i(1+\xi\bar{\xi})\xi\right),
\end{equation}
with inverse
\begin{equation}\label{e:confcoinv1}
    \xi=\frac{i(Z_1-Z_2)}{2-\sqrt{4+|Z_1-Z_2|^2}},
\end{equation}
\begin{equation}\label{e:confcoinv2}
    \eta=\frac{Z_1+Z_2}{2-\sqrt{4+|Z_1-Z_2|^2}}+\frac{(Z_1-Z_2)(|Z_1|^2-|Z_2|^2)}{2(2-\sqrt{4+|Z_1-Z_2|^2})^2}.
\end{equation}
\end{prop}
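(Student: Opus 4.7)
The plan is to verify the formulas by direct substitution, exploiting the parametrization $\Phi$ from (\ref{e:minit}) to extract two points on the line labelled $(\xi,\eta)$, compute its Pl\"ucker coordinates in $(\xi,\eta)$, and then assemble the conformal coordinates.

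First, I would take $r=0$ and $r=1$ in (\ref{e:minit}) to obtain two points on the oriented line: the foot $\vec s$ of the perpendicular from the origin, and $\vec t=\vec s+\vec u$, where $\vec u$ is the unit direction vector of the line (the inverse stereographic image of $\xi$). This makes $\vec q=\vec s-\vec t=-\vec u$ and $\vec p=\vec s\times \vec u$ immediate, and in particular
\[
q_1+iq_2=-\frac{2\xi}{1+\xi\bar\xi},\qquad q_3=-\frac{1-\xi\bar\xi}{1+\xi\bar\xi}.
\]

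Second, I would combine the real $X_i$ into the complex pair $Z_1,Z_2$ using the identity
\[
Z_1\pm Z_2=\frac{-i\bigl[(p_1+ip_2)\pm(q_1+iq_2)\bigr]}{q_3}.
\]
Substituting the closed-form expressions for $\vec s$ and $\vec u$ into the further identity $p_1+ip_2=-i[u_3(s_1+is_2)-s_3(u_1+iu_2)]$, expanding, and simplifying using the factorization $1-\xi^2\bar\xi^2=(1-\xi\bar\xi)(1+\xi\bar\xi)$, should yield (\ref{e:confco1})--(\ref{e:confco2}) after cancellations.

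For the inverse, the key observation is that $Z_1-Z_2=-4i\xi/(1-\xi\bar\xi)$ depends only on $\xi$. Taking $|Z_1-Z_2|^2$ produces a quadratic in $|\xi|^2$ whose two roots correspond to a point of $S^2$ and its antipode; the hemisphere restriction $|\xi|^2<1$ selects a unique root, and rationalization converts the resulting formula into (\ref{e:confcoinv1}). With $\xi$ thus determined, $Z_1+Z_2=4(\eta+\xi^2\bar\eta)/(1-\xi^2\bar\xi^2)$ is $\mathbb{R}$-linear in $(\eta,\bar\eta)$, and combining it with its complex conjugate yields a linear system solved by (\ref{e:confcoinv2}). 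The main obstacle is organizational bookkeeping rather than any deep insight; the most delicate point is the correct branch choice for the square root in inverting $|Z_1-Z_2|^2$, since the wrong branch would map $\xi$ to $-1/\bar\xi$, the stereographic coordinate of the antipodal point on $S^2$.
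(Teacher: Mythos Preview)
Your approach is correct but takes a different route from the paper. The paper's proof is a pure verification: it pulls back the metric (\ref{e:metric}) via the inverse transformation (\ref{e:confcoinv1})--(\ref{e:confcoinv2}) and exhibits the result in manifestly conformally flat form,
\[
ds^2=\frac{1}{1+{\textstyle{\frac{1}{4}}}|Z_1-Z_2|^2}\left(dZ_1d\bar{Z}_1-dZ_2d\bar{Z}_2\right),
\]
thereby confirming simultaneously that $(Z_1,Z_2)$ are conformal coordinates and identifying the conformal factor. Your route is constructive rather than verificational: you derive (\ref{e:confco1})--(\ref{e:confco2}) directly from the Pl\"ucker definitions stated just before the proposition, and then invert. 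Your argument is more transparent about where the formulas come from and ties them explicitly to the Pl\"ucker construction; the paper's is shorter and has the bonus of producing the conformal factor, which is what one actually needs downstream. One small slip: your displayed identity labelled ``$Z_1\pm Z_2$'' in fact gives $Z_1$ and $Z_2$ individually (the $+$ sign yields $Z_1$, the $-$ sign $Z_2$); the sum and difference are $Z_1+Z_2=-2i(p_1+ip_2)/q_3$ and $Z_1-Z_2=-2i(q_1+iq_2)/q_3$, which is what you then correctly use.
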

\begin{proof}
The result follows from pulling back the metric (\ref{e:metric}) by the transformation (\ref{e:confcoinv1}) and (\ref{e:confcoinv2}) and the result is
\[
ds^2=\frac{1}{1+{\textstyle{\frac{1}{4}}}|Z_1-Z_2|^2}\left(dZ_1d\bar{Z}_1-dZ_2d\bar{Z}_2\right).
\]
\end{proof}
\vspace{0.1in}
In conformal coordinates, reflection is exactly reflection through the origin:
\vspace{0.1in}
\begin{coro}
In the special case where $\gamma_0$ is the $x_3-$axis, the map in conformal coordinates is reflection in the origin:
\[
{\mathcal R}_0(X_1,X_2,X_3,X_4)=(-X_1,-X_2,-X_3,-X_4).
\]
\end{coro}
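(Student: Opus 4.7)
My plan is a direct verification by substitution. Lemma \ref{l:1} gives ${\mathcal R}_0$ explicitly in the holomorphic coordinates $(\xi,\eta)$, while Proposition \ref{p:conf} gives the conformal coordinates $Z_1,Z_2$ as functions of $(\xi,\eta)$ via (\ref{e:confco1})--(\ref{e:confco2}). It therefore suffices to compose the two maps and check that the image of $(Z_1,Z_2)$ is $(-Z_1,-Z_2)$; the real version $(X_1,X_2,X_3,X_4)\mapsto(-X_1,-X_2,-X_3,-X_4)$ then follows by taking real and imaginary parts.

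The first step is to record how the building blocks appearing in (\ref{e:confco1})--(\ref{e:confco2}) transform under $(\xi,\eta)\mapsto(1/\bar\xi,\bar\eta/\bar\xi^2)$. A short computation gives
\[
1-\xi^2\bar\xi^2 \longmapsto -\frac{1-\xi^2\bar\xi^2}{\xi^2\bar\xi^2},\qquad \eta+\xi^2\bar\eta \longmapsto \frac{\eta+\xi^2\bar\eta}{\xi^2\bar\xi^2},\qquad (1+\xi\bar\xi)\xi \longmapsto \frac{(1+\xi\bar\xi)\xi}{\xi^2\bar\xi^2}.
\]
Plugging these into (\ref{e:confco1}), the prefactor $2/(1-\xi^2\bar\xi^2)$ becomes $-2\xi^2\bar\xi^2/(1-\xi^2\bar\xi^2)$, while the two terms inside the parenthesis both acquire the common factor $1/(\xi^2\bar\xi^2)$. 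The factor $\xi^2\bar\xi^2$ then cancels uniformly and the overall minus sign survives, yielding exactly $-Z_1$. The same substitutions in (\ref{e:confco2}) produce $-Z_2$, since the only difference is the sign of the $i(1+\xi\bar\xi)\xi$ term, which is preserved by the substitution.

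There is no real conceptual obstacle: the proof is entirely algebraic and the only step requiring care is sign-tracking. The reason the computation closes so cleanly is the uniform scaling behaviour above: all three nontrivial numerator building blocks transform by the same factor $1/(\xi^2\bar\xi^2)$, while the denominator $1-\xi^2\bar\xi^2$ picks up an additional $-1$, so the common scaling cancels and leaves precisely the sign flip. As a consistency check, one may note that ${\mathcal R}_0$ is an involution, in agreement with $(Z_1,Z_2)\mapsto(-Z_1,-Z_2)$ being of order two.
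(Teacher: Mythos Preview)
Your proof is correct and follows the same approach as the paper, which simply states that the result follows from applying Lemma~\ref{l:1} to the transformations (\ref{e:confco1}) and (\ref{e:confco2}). You have carried out this substitution in detail, and the sign-tracking is clean and accurate.
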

\begin{proof}
This follows easily from applying Lemma \ref{l:1} to the transformations (\ref{e:confco1}) and (\ref{e:confco2}).
\end{proof}

\vspace{0.1in}

%%%%%%%%%%%%%%%%%%%%%%%%%%%%%%%%%%%%%%%%%%%%%%%%%%%%%%%%%%%%%%%%%%%%%%%%%%%%%%%%%%%%%%%%%%%%%%%%%%%%%%%%%%%%%%%%%%%%%%%%%%%%%%%%%%%%%%%%%
\section{Further Examples of Paracomplex Structures}\label{s:5}
%%%%%%%%%%%%%%%%%%%%%%%%%%%%%%%%%%%%%%%%%%%%%%%%%%%%%%%%%%%%%%%%%%%%%%%%%%%%%%%%%%%%%%%%%%%%%%%%%%%%%%%%%%%%%%%%%%%%%%%%%%%%%%%%%%%%%
This section provides examples of 4-manifolds with natural almost paracomplex structures, some of which are isometric, some anti-isometric and some parallel. 

\vspace{0.1in}

\subsection{The space of oriented geodesics in real space forms}
%%%%%%%%%%%%%%%%%%%%%%%%%%%%%%%%%%%%%%%%%%%%%%%%%%%%%%%%%%%%%%%%%%%%%%%%%%%%%%%%%%%%%%%%%%%%%%%%%%%%%%%%%%%%%%%%%%%%%%%%%%%%%%%%%%%%%
Here we consider almost (para)complex structures that arise on spaces of oriented geodesics of 3-manifolds of constant curvature \cite{agk} \cite{GG1} \cite{kahlermetric} \cite{Salvai}.

Let ${\mathbb R}_p^4$ be (pseudo-)Euclidean 4-space $({\mathbb R}^4,\<.,.\>_p)$, where
\[
\<.,.\>_p=-\sum_{i=1}^p dX_i^2+\sum_{i=p+1}^4dX_i^2,
\]
with $p\in\{0,1,2,3\}$.
Let ${\mathbb S}_p^{3}$ be the quadric in ${\mathbb R}^4$ defined by
\[
{\mathbb S}_p^{3}=\{x\in {\mathbb R}^4|\; \<x,x\>_p=1\}.
\]
In particular, the 3-sphere ${\mathbb S}^{3}$ is the quadric ${\mathbb S}_0^{3}$, the hyperbolic 3-space ${\mathbb H}^{3}$ is anti-isometric to ${\mathbb S}_3^{3}\cap \{x\in {\mathbb R}^4|\, X_4>0\}$, the de Sitter 3-space $d {\mathbb S}^{3}$ is the quadric ${\mathbb S}_1^{3}$ and, the anti de Sitter 3-space $Ad {\mathbb S}^{3}$ is anti-isometric to the quadric ${\mathbb S}_2^{3}$.

Using the inclusion map $i:{\mathbb S}_p^{3}\rightarrow {\mathbb R}_p^4$, the induced metric $g_p:=i^{\ast}\<.,.\>_p$ is of constant sectional curvature equal to $1$. The space of oriented geodesics in ${\mathbb S}_p^{3}$ is a 4-dimensional manifold and is identified with the following Grasmmannian spaces of oriented planes on ${\mathbb R}_p^4$:
\[
{\mathbb L}^{\pm}({\mathbb S}_p^{3})=\{x\wedge y\in \Lambda^2({\mathbb R}_p^4)|\; y\in T_x{\mathbb S}_p^{3},\; g_p(y,y)=\pm 1\},
\]

Let $\iota:{\mathbb L}^{\pm}({\mathbb S}_p^{3})\rightarrow \Lambda^2({\mathbb R}_p^4)$, be the inclusion map and let $\left<\left<,\right>\right>_p$ be the flat metric in the 6-manifold $\Lambda^2({\mathbb R}_p^4)$, defined by
\[
\left<\left<u_1\wedge v_1,u_2\wedge v_2\right>\right>_p:=\left<u_1,u_2\right>_p\left<v_1,v_2\right>_p-\left<u_1,v_2\right>_p\left<u_2,v_1\right>_p.
\]
The signature of $\left<\left<,\right>\right>_p$ is $(6,0)$, $(3,3)$, $(4,2)$, $(3,3)$ for $p=0,1,2,3$, respectively. The metric $G_p=\iota^{\ast}\left<\left<,\right>\right>_p$ on  ${\mathbb L}^{\pm}({\mathbb S}_p^{3})$ is Einstein \cite{An}. 

Given an oriented plane $x\wedge y\in {\mathbb L}^{\pm}({\mathbb S}_p^{3})$, the tangent space is identified to
\[
T_{x\wedge y}{\mathbb L}^{\pm}({\mathbb S}_p^{3})=\{x\wedge X+y\wedge Y\in \Lambda^2({\mathbb R}_p^4):\,\, X,Y\in (x\wedge y)^{\bot}\}.
\]
It is not hard to confirm that the following holds on ${\mathbb L}^{\pm}({\mathbb S}_p^{3})$:
\[
G_p(x\wedge X_1+y\wedge Y_1,x\wedge X_2+y\wedge Y_2)=g_p(X_1,X_2)\pm g_p(Y_1,Y_2).
\]
Let $x\wedge y\in {\mathbb L}^{\pm}({\mathbb S}_p^{3})$ and $X\in (x\wedge y)^{\bot}-\{0\}$. Define $J:(x\wedge y)^{\bot}\rightarrow (x\wedge y)^{\bot}$, so that 
$(x,y,X,JX)$ forms an oriented orthogonal frame. 
The following almost (para)complex structures
\[{\mathbb J}(x\wedge X+y\wedge Y)=x\wedge JX+y\wedge JY,\]
%and 
\[
{\mathbb J}'(x\wedge X+y\wedge Y)=y\wedge X\mp x\wedge Y,
\]
together with the metric $G_p$, define a (para-)K\"ahler structure on ${\mathbb L}^{\pm}({\mathbb S}_p^{3})$ - see \cite{An}. In particular, in ${\mathbb L}^{\pm}({\mathbb S}_p^{3})$,
\[
{\mathbb J}^2=\begin{cases}\mp(-1)^p\,\mbox{Id},\quad p\in\{1,2\}\\ -\mbox{Id},\qquad\;\;\quad p\in\{0,3\}\end{cases}\qquad ({\mathbb J}')^2=\mp Id.
\]

Define the following almost (para) complex structure on ${\mathbb L}^{\pm}({\mathbb S}_p^{3})$ by
\[
{\mathbb J}^{\ast}=-{\mathbb J}'{\mathbb J}.
\]

The following proposition gives the relationship between the (para)complex structure ${\mathbb J}^{\ast}$ and the  the Hodge star operator $\ast$ defined on the space of bivectors $\Lambda^2({\mathbb R}_p^4)$ in ${\mathbb R}_p^4$.

\vspace{0.1in}
\begin{prop}
The almost (para)complex structure ${\mathbb J}^{\ast}$ is (up to a sign) the Hodge star operator $\ast$ defined on the space of bivectors $\Lambda^2({\mathbb R}_p^4)$ in ${\mathbb R}_p^4$, restricted to the space of oriented geodesics ${\mathbb L}^{\pm}({\mathbb S}^3_p)$.
\end{prop}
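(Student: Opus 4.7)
My plan is to verify the identity by a direct computation in an oriented orthonormal frame of $\mathbb{R}_p^4$ adapted to an arbitrary point $x\wedge y\in\mathbb{L}^\pm(\mathbb{S}_p^3)$. Pick $(e_1,e_2,e_3,e_4)=(x,y,X_0,JX_0)$, where $X_0\in(x\wedge y)^\perp$ is a unit vector and $J$ is as in the definition preceding the proposition. By construction this frame is oriented, and the four bivectors $\{e_1\wedge e_3,\;e_1\wedge e_4,\;e_2\wedge e_3,\;e_2\wedge e_4\}$ form a basis of the tangent space $T_{x\wedge y}\mathbb{L}^\pm(\mathbb{S}_p^3)$. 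The proposition thus reduces to checking the equality of two endomorphisms on this 4-dimensional space.

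For $\mathbb{J}^*=-\mathbb{J}'\mathbb{J}$, I would evaluate $\mathbb{J}$ and $\mathbb{J}'$ separately on each of the four basis bivectors directly from their definitions, using that $J$ acts on $(x\wedge y)^\perp$ as a rotation with $J^2=\pm\mathrm{Id}$ according to the signature induced on that subspace. Composing gives $\mathbb{J}^*$ in closed form on each basis element. For the Hodge star, I would apply the signature-dependent formula $\ast(e_i\wedge e_j)=\epsilon_i\epsilon_j\,\mathrm{sgn}(i,j,k,l)\,e_k\wedge e_l$, with $\epsilon_i=\langle e_i,e_i\rangle_p$ and $(i,j,k,l)$ a permutation of $(1,2,3,4)$. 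The key combinatorial observation is that $\ast$ sends any $e_i\wedge e_j$ with $i\in\{1,2\}$ and $j\in\{3,4\}$ to a scalar multiple of $e_{i'}\wedge e_{j'}$ with $\{i,i'\}=\{1,2\}$ and $\{j,j'\}=\{3,4\}$; in particular $\ast$ preserves $T_{x\wedge y}\mathbb{L}^\pm(\mathbb{S}_p^3)$ and realises the same pattern of basis permutations as $\mathbb{J}^*$.

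Comparing the two matrices entry by entry, $\mathbb{J}^*$ and $\ast$ then differ by at most a common scalar sign, determined by the product $\epsilon_1\epsilon_2\epsilon_3\epsilon_4$ and by the choice of $\mathbb{L}^+$ versus $\mathbb{L}^-$; together these control both the orientation of $(x,y,X_0,JX_0)$ inside $\mathbb{R}_p^4$ and the sign appearing in the definition of $\mathbb{J}'$. The main obstacle is purely bookkeeping: tracking these signs coherently across the four signatures $p\in\{0,1,2,3\}$ and the two choices in $\mathbb{L}^\pm$. A convenient shortcut is that $\ast^2$ and $(\mathbb{J}^*)^2$ each equal $\pm\mathrm{Id}$ on $T_{x\wedge y}\mathbb{L}^\pm(\mathbb{S}_p^3)$, so once agreement up to a scalar is established on a single basis element, the shared permutation pattern together with linearity forces $\mathbb{J}^*=\pm\ast$ on the whole tangent space, which is exactly the claim.
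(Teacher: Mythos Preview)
Your proposal is correct and follows essentially the same approach as the paper: a direct pointwise computation of both endomorphisms in a frame adapted to $x\wedge y$. The paper's execution is marginally more streamlined---rather than writing out four basis bivectors and comparing $4\times 4$ matrices, it computes $\ast(x\wedge X)$ and $\ast(y\wedge Y)$ for arbitrary $X,Y\in(x\wedge y)^\perp$ directly from the defining relation $\ast(u\wedge v)\wedge u\wedge v=\langle\langle u\wedge v,u\wedge v\rangle\rangle_p\,\mathrm{vol}$, obtaining $\ast(x\wedge X+y\wedge Y)=-y\wedge JX\pm x\wedge JY$ in one stroke, and then reads off the equality with $-\mathbb{J}^*$ by unfolding $\mathbb{J}^*=-\mathbb{J}'\mathbb{J}$; this sidesteps the sign bookkeeping you flag as the main obstacle.
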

\begin{proof}
Identify each vector $\partial/\partial X_k$ with its dual $dX_k$ and therefore each bivector $\partial/\partial X_k\wedge \partial/\partial X_l$ with the 2-form $dX_k\wedge dX_l$ and consider the Hodge star operator $\ast$ as a mapping in the space of bivectors in ${\mathbb R}_p^4$. In particular, if $u\wedge v\in \Lambda^2({\mathbb R}_p^4)$, then $\ast(u\wedge v)$ is defined as:
\[
\ast(u\wedge v)\wedge u\wedge v=\left<\left<u\wedge v,u\wedge v\right>\right>_pe_1\wedge e_2\wedge e_3\wedge e_4,
\]
where $(e_1,e_2,e_3,e_4)$ is an oriented orthonormal frame of ${\mathbb R}_p^4$. 

If $x\wedge y\in {\mathbb L}^{\pm}({\mathbb S}^3_p)$ and $X,Y\in (x\wedge y)^\bot-\{0\}$, then
\[
\ast(x\wedge X)\wedge x\wedge X=-y\wedge JX\wedge x\wedge X,
\]
which implies 
\begin{equation}\label{e:star1}
\ast(x\wedge X)=-y\wedge JX.
\end{equation}
On the other hand,
\[
\ast(y\wedge Y)\wedge y\wedge Y=\pm x\wedge JY\wedge y\wedge Y,
\]
and thus, 
\begin{equation}\label{e:star2}
\ast(y\wedge Y)=\pm x\wedge JY.
\end{equation}
Using (\ref{e:star1}) and (\ref{e:star2}) we finally get
\[
\ast (x\wedge X+y\wedge Y)=-y\wedge JX\pm x\wedge JY,
\]
which shows that $\ast$ is an endomorphism on $T_{x\wedge y}{\mathbb L}^{\pm}({\mathbb S}^3_p)$. It is not hard now to see that the following holds
\[
\left. \ast \right|_{T_{x\wedge y}{\mathbb L}^{\pm}({\mathbb S}^3_p)}=-{\mathbb J}^{\ast}_{x\wedge y}.
\]
\end{proof}

\vspace{0.1in}

\begin{prop}
The triple $({\mathbb J},{\mathbb J}',{\mathbb J}^{\ast})$ on ${\mathbb L}^{\pm}({\mathbb S}_p^3)$ are commuting (para)complex structures and are isometric or anti-isometric in accordance with Table 1 below.
\begin{table}[ht]
\caption{ (Para)complex Structures on Geodesic Spaces}
 \hspace{1em}  \begin{tabular}{| l  |c |c |c |  }
     \hline
      &   $({\mathbb J},G_p)$ & $({\mathbb J}',G_p)$ & $({\mathbb J}^{\ast},G_p)$ \\ \hline 
   ${\mathbb L}^+({\mathbb S}_0^3)$   & complex/isometric & complex/isometric & para/isometric\\ \hline 
   ${\mathbb L}^+({\mathbb S}_1^3)$   & para/anti & complex/isometric & complex/anti\\ \hline 
   ${\mathbb L}^-({\mathbb S}_1^3)$   & complex/isometric & para/anti & complex/anti\\ \hline 
   ${\mathbb L}^+({\mathbb S}_2^3)$   & complex/isometric & complex/isometric & para/isometric\\ \hline 
   ${\mathbb L}^-({\mathbb S}_2^3)$   & para/anti & para/anti & para/isometric\\ \hline 
   ${\mathbb L}^-({\mathbb S}_3^3)$   & complex/isometric & para/anti & complex/anti\\ \hline 
\end{tabular}
\label{table:nonlin} 
\end{table}
All are parallel with respect to the Levi-Civita connection of $G_p$.
\end{prop}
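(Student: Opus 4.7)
The plan is to verify in order the four assertions of the proposition --- commutativity, (para)complex type, isometric versus anti-isometric type, and parallelism --- working throughout in the algebraic frame
\[
T_{x\wedge y}{\mathbb L}^{\pm}({\mathbb S}_p^{3})=\{x\wedge X+y\wedge Y:\; X,Y\in (x\wedge y)^{\bot}\}.
\]
Commutativity is the starting point: once ${\mathbb J}$ and ${\mathbb J}'$ are shown to commute, the commutativity of ${\mathbb J}^{\ast}=-{\mathbb J}'{\mathbb J}$ with each of them is automatic. For arbitrary $X,Y\in(x\wedge y)^{\bot}$ a direct computation yields
\[
{\mathbb J}{\mathbb J}'(x\wedge X+y\wedge Y)={\mathbb J}(y\wedge X\mp x\wedge Y)=y\wedge JX\mp x\wedge JY={\mathbb J}'{\mathbb J}(x\wedge X+y\wedge Y),
\]
since $J$ preserves the 2-plane $(x\wedge y)^{\bot}$.

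For the (para)complex type, a further application of ${\mathbb J}'$ to $y\wedge X\mp x\wedge Y$ gives $({\mathbb J}')^{2}=\mp\mathrm{Id}$, matching the formula already asserted in the preamble to the proposition. For ${\mathbb J}^{2}$, the square reduces to $J^{2}$ on the 2-plane $(x\wedge y)^{\bot}$; when this plane inherits a definite induced metric, $J$ is a $90^{\circ}$ rotation with $J^{2}=-\mathrm{Id}$, while when the induced metric is Lorentzian $J$ is an involutive reflection with $J^{2}=+\mathrm{Id}$. The identity $({\mathbb J}^{\ast})^{2}=({\mathbb J}')^{2}{\mathbb J}^{2}$ then follows from commutativity. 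A row-by-row comparison with Table 1 is then a matter of reading off the signature of $(x\wedge y)^{\bot}$ from $p$ and the choice of ${\mathbb L}^{\pm}$, which determines ${\mathbb J}^{2}$, and combining it with $({\mathbb J}')^{2}=\mp\mathrm{Id}$.

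For the isometric versus anti-isometric classification, the identity
\[
G_{p}(x\wedge X_{1}+y\wedge Y_{1},x\wedge X_{2}+y\wedge Y_{2})=g_{p}(X_{1},X_{2})\pm g_{p}(Y_{1},Y_{2})
\]
reduces every bilinear computation to one on $(x\wedge y)^{\bot}$. The map ${\mathbb J}'$ swaps the two slots and produces an overall factor of $\mp 1$, so $G_{p}({\mathbb J}'\cdot,{\mathbb J}'\cdot)=\mp G_{p}(\cdot,\cdot)$ drops out immediately. For ${\mathbb J}$ one needs $g_{p}(JX,JX)=\epsilon\, g_{p}(X,X)$ with $\epsilon=+1$ when $(x\wedge y)^{\bot}$ is definite and $\epsilon=-1$ when it is Lorentzian, since in the Lorentzian case $J$ interchanges timelike and spacelike directions. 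The behaviour of ${\mathbb J}^{\ast}$ is the product of these two sign contributions. Tabulating these signs against the six pairs $(p,\pm)$ reproduces the classification in Table 1.

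Parallelism is the simplest step once the previous facts are in place: the paper has already invoked \cite{An} to state that $(G_{p},{\mathbb J},{\mathbb J}')$ forms a (para-)K\"ahler structure, which by definition means $\nabla{\mathbb J}=\nabla{\mathbb J}'=0$. Since the Levi-Civita connection is a derivation of tensor products, $\nabla{\mathbb J}^{\ast}=-(\nabla{\mathbb J}')\circ{\mathbb J}-{\mathbb J}'\circ(\nabla{\mathbb J})=0$. The main obstacle in executing this plan is the sign bookkeeping in the middle two steps: four separate sign ambiguities --- the value of $p$, the choice of ${\mathbb L}^{\pm}$, the $\mp$ in the definition of ${\mathbb J}'$, and whether $(x\wedge y)^{\bot}$ is Riemannian or Lorentzian --- must all be tracked simultaneously and combined consistently to reproduce each row of the table.
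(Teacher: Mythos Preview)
Your approach is essentially the paper's own: defer the properties of ${\mathbb J}$ and ${\mathbb J}'$ to \cite{An}, and obtain those of ${\mathbb J}^{\ast}$ by composition (parallelism from the Leibniz rule, square and isometry type as products of signs). You add an explicit verification of commutativity, which the paper leaves implicit, and you spell out that the sign of ${\mathbb J}^{2}$ is governed by the signature of $(x\wedge y)^{\bot}$, which is a cleaner way to package the paper's case-by-case formula for ${\mathbb J}^{2}$.

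There is, however, a sign slip in your treatment of ${\mathbb J}'$. Writing ${\mathbb J}'(x\wedge X+y\wedge Y)=\mp\, x\wedge Y+y\wedge X$ and using $G_{p}=g_{p}(X_{1},X_{2})\pm g_{p}(Y_{1},Y_{2})$, one finds
\[
G_{p}({\mathbb J}'v_{1},{\mathbb J}'v_{2})=g_{p}(\mp Y_{1},\mp Y_{2})\pm g_{p}(X_{1},X_{2})=g_{p}(Y_{1},Y_{2})\pm g_{p}(X_{1},X_{2})=\pm\,G_{p}(v_{1},v_{2}),
\]
so ${\mathbb J}'$ is isometric on ${\mathbb L}^{+}$ and anti-isometric on ${\mathbb L}^{-}$, matching Table~1; your stated factor $\mp 1$ would give the opposite and contradict every row. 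This is a bookkeeping error rather than a gap in the strategy --- once corrected, the product of signs for ${\mathbb J}^{\ast}$ yields $(-1)^{p}$, which is the paper's formula $G_{p}({\mathbb J}^{\ast}\cdot,{\mathbb J}^{\ast}\cdot)=(-1)^{p}G_{p}(\cdot,\cdot)$.
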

\begin{proof}
The (para)complex structures ${\mathbb J}$ and ${\mathbb J}'$ were introduced in \cite{An} and their properties noted. For ${\mathbb J}^\ast$, a direct computation yields
\begin{enumerate}
\item ${\mathbb J}^{\ast}$ is $G_p$-parallel since ${\mathbb J}$ and ${\mathbb J}'$ are both $G_p$-parallel (see \cite{An}).
\item $({\mathbb J}^{\ast})^2=(-1)^p\, Id$.
\item $G_p({\mathbb J}^{\ast}\cdot,{\mathbb J}^{\ast}\cdot)=(-1)^pG_p(\cdot,\cdot)$,
\end{enumerate}
and the proof follows.
\end{proof}

\vspace{0.1in}

Define the metric $G'_p:=G_p({\mathbb J}^{\ast} .,.)$, so that on ${\mathbb L}^{\pm}({\mathbb S}_p^{3})$
\[
G'_p(x\wedge X_1+y\wedge Y_1,x\wedge X_2+y\wedge Y_2)=\pm (g_p(X_1,JY_2)+g_p(X_2,JY_1)).
\]
The following is known:
\begin{prop}\cite{An}
The metric $G'_p$ is of neutral signature, scalar flat and locally conformally flat.
\end{prop}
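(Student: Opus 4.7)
The plan is to exploit the fact that $\mathbb{J}^{\ast}$ is $G_p$-parallel to relate the geometry of $G'_p$ directly to that of $G_p$. The key preliminary observation is that since $\nabla^{G_p}\mathbb{J}^{\ast}=0$, the covariant derivative $\nabla^{G_p}G'_p$ vanishes identically, because the defining identity $G'_p(X,Y)=G_p(\mathbb{J}^{\ast}X,Y)$ expresses $G'_p$ purely in terms of two $\nabla^{G_p}$-parallel objects. Since $\nabla^{G_p}$ is torsion-free and preserves $G'_p$, the uniqueness of the Levi-Civita connection forces $\nabla^{G_p}=\nabla^{G'_p}$. In particular, the Riemann curvature endomorphism and the $(0,2)$-Ricci tensor are common to both metrics.

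For the signature claim I would work in a fixed tangent space $T_{x\wedge y}{\mathbb L}^{\pm}({\mathbb S}_p^3)$ with the adapted basis $\{x\wedge X, x\wedge JX, y\wedge X, y\wedge JX\}$ for an orthonormal $(X,JX)\subset (x\wedge y)^{\perp}$. Using $\mathbb{J}^{\ast}(x\wedge X+y\wedge Y)=-y\wedge JX\pm x\wedge JY$ and the block formula for $G_p$, the Gram matrix of $G'_p$ in this basis turns out to be block anti-diagonal with $\pm$ identity blocks. One then reads off eigenvalues $(+,+,-,-)$ giving signature $(2,2)$. Symmetry of $G'_p$ needs a separate check: it follows from the case-by-case analysis of Table 1 — in every row of the table $\mathbb{J}^{\ast}$ is either an isometric paracomplex or an anti-isometric complex structure, both of which satisfy $G_p(\mathbb{J}^{\ast}X,Y)=G_p(X,\mathbb{J}^{\ast}Y)$.

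For scalar flatness, since the Ricci $(0,2)$-tensors agree, $\mathrm{Ric}^{G'_p}=\mathrm{Ric}^{G_p}=\lambda\, G_p$ where $\lambda$ is the Einstein constant of $G_p$. Taking the $G'_p$-trace yields
\[
\mathrm{scal}^{G'_p}=\lambda\,\mathrm{tr}_{G'_p}(G_p)=\lambda\,\mathrm{tr}(\mathbb{J}^{\ast}),
\]
because $G'_p(\cdot,\cdot)=G_p(\mathbb{J}^{\ast}\cdot,\cdot)$ implies $G'^{-1}_p G_p=(\mathbb{J}^{\ast})^{-1}$ up to signs. Since $\mathbb{J}^{\ast}$ is either a complex or a paracomplex structure on a $4$-dimensional space, its eigenvalues come in balanced pairs ($\pm i$ or $\pm 1$), so $\mathrm{tr}(\mathbb{J}^{\ast})=0$, hence $\mathrm{scal}^{G'_p}=0$.

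For local conformal flatness, the argument is essentially that of Main Theorem \ref{t:4} transported to this pseudo-Riemannian, parallel (para)complex setting. With $\nabla^{G_p}=\nabla^{G'_p}$ fixed, the Weyl tensor of $G'_p$ is the Riemann tensor of $G_p$ minus Kulkarni–Nomizu corrections built from $G'_p$ and $\mathrm{Ric}^{G'_p}=\lambda G_p$. Substituting and using that $\mathbb{J}^{\ast}$ intertwines the eigenplane decomposition (or equivalently diagonalises $G_p$ versus $G'_p$ on the appropriate subspaces), the correction terms cancel the purely Ricci part of the Riemann tensor, leaving only the trace-free Weyl piece, which vanishes because $G_p$ is already $\Lambda^2$-decomposable (Einstein on a geodesic space of a constant-curvature $3$-manifold has self- and anti-self-dual Weyl pieces forced by the symmetries of ${\mathbb S}_p^3$). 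The main obstacle I expect is precisely this last point: making the Weyl-cancellation computation genuinely signature-free and uniform across the six rows of Table \ref{table:nonlin}, since the signs in the Kulkarni–Nomizu step depend on whether $\mathbb{J}^{\ast}$ is complex or paracomplex and whether it is isometric or anti-isometric. I would handle this by organising the computation into the two cases $(\mathbb{J}^{\ast})^2=+\mathrm{Id}$ and $(\mathbb{J}^{\ast})^2=-\mathrm{Id}$, noting that in each case the $\pm$ signs appear consistently and produce the same final vanishing.
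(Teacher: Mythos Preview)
The paper does not prove this proposition; it is quoted from \cite{An} without argument, so there is no in-paper proof to compare against. Your treatment of neutral signature and scalar flatness is correct: the observation that $\nabla^{G_p}\mathbb{J}^{\ast}=0$ forces $\nabla^{G_p}=\nabla^{G'_p}$, hence $\mathrm{Ric}^{G'_p}=\mathrm{Ric}^{G_p}=\lambda\, G_p$, is exactly the right engine, and the trace identity $\mathrm{scal}^{G'_p}=\lambda\,\mathrm{tr}\bigl((\mathbb{J}^{\ast})^{-1}\bigr)=0$ is clean and signature-independent.

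The conformal-flatness step, however, has a genuine gap. Your plan is to invoke a generalisation of Main Theorem~\ref{t:4}, and this is fine for the three rows of Table~\ref{table:nonlin} in which $\mathbb{J}^{\ast}$ is an \emph{isometric paracomplex} structure (the paper remarks after that proof that the signature of $g$ is immaterial). But in the other three rows $\mathbb{J}^{\ast}$ is an \emph{anti-isometric complex} structure, and the proof of Main Theorem~\ref{t:4} does not transfer: it is carried out in a real eigenbasis $\{v_1,v_2,v_3,v_4\}$ of $j$ and relies on identities such as $R(v_1,v_3,v_1,v_3)=-R(v_1,v_3,v_1,v_3)$ coming from $jv_3=-v_3$; a complex $\mathbb{J}^{\ast}$ has no real eigenvectors, so this computation is unavailable as written. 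Your fallback sentence --- that the remaining Weyl piece ``vanishes because $G_p$ is already $\Lambda^2$-decomposable'' with self-dual and anti-self-dual parts ``forced by the symmetries of $\mathbb{S}_p^3$'' --- is not an argument: $G_p$ itself is Einstein but \emph{not} conformally flat (e.g.\ $\mathbb{L}^+(\mathbb{S}_0^3)\cong S^2\times S^2$ with the product metric has nonzero Weyl tensor), so you must actually show why twisting the $(0,4)$ curvature by $\mathbb{J}^{\ast}$ in the last slot kills it after the Schouten correction. You flag this as ``the main obstacle'' but do not resolve it; the case split you outline is a plan, not a proof.
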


\vspace{0.1in}

\subsection{The product of surfaces}
%%%%%%%%%%%%%%%%%%%%%%%%%%%%%%%%%%%%%%%%%%%%%%%%%%%%%%%%%%

Paracomplex structures are sometimes referred to as almost product structures, as they give a splitting of the tangent bundle of the manifold at each point. Moreover, when the structure is parallel, Main Theorem \ref{t:3} says that the splitting is locally tangent to a pair of foliations by surfaces. In this section the optimal case, where the 4-manifold is a global product of Riemannian surfaces, is considered and the associated paracomplex structures described.

Let $(\Sigma,g)$ be an oriented 2-manifold. If $g$ is Riemannian (Lorentz) there exists a canonical (para)complex structure $j$ defined on $\Sigma$ (respectively) - for more details see \cite{An2} for the Riemannian case and \cite{Ge2} for the Lorentz case. Consider two (Lorentz) Riemannian 2-manifolds $(\Sigma_1,g_1)$ and $(\Sigma_2,g_2)$ together with their canonical (para)complex structures $j_1$ and $j_2$ and the cartesian product $\Sigma_1\times\Sigma_2$, respectively. The projections $\pi_1$ and $\pi_2$ are defined by
\[
\pi_k:\Sigma_1\times\Sigma_2\rightarrow\Sigma_k:(x_1,x_2)\mapsto x_k,
\]
for $k\in\{1,2\}$. The tangent bundle $T(\Sigma_1\times\Sigma_2)$ is identified with $T\Sigma_1\oplus T\Sigma_2$ by 
\[
X\in T(\Sigma_1\times\Sigma_2)\simeq T\Sigma_1\oplus T\Sigma_2 \ni (X_1,X_2).
\]
Define the almost (para)complex structures $J_1$ and $J_2$ on $T(\Sigma_1\times\Sigma_2)$ by
\[
J_2=j_1\oplus j_2,\qquad J_1=j_1\oplus (-j_2),
\]
and the metrics $G_{\pm}$ by
\[
G_+=\pi_1^{\ast}g_1+\pi_2^{\ast}g_2,\qquad G_-=\pi_1^{\ast}g_1-\pi_2^{\ast}g_2.
\]
We adopt the following notation:
\begin{equation}\label{e:gepsilon}
G_{\epsilon}=\begin{cases}G_+,\quad \mbox{if}\;\epsilon=1\\ G_-,\quad \mbox{if}\;\epsilon=-1\end{cases}
\end{equation}
\begin{prop}\cite{Ge1}
The 4-manifolds $(\Sigma_1\times\Sigma_2,G_{\epsilon},J_1)$ and $(\Sigma_1\times\Sigma_2,G_{\epsilon},J_2)$ are both (para)K\"ahler. 
\end{prop}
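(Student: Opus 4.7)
The plan is to exploit the direct-sum decomposition $T(\Sigma_1\times\Sigma_2)=T\Sigma_1\oplus T\Sigma_2$ that is respected by every tensor in play. First I would verify that, for either choice $\epsilon=\pm 1$, the Levi-Civita connection $\nabla^{G_\epsilon}$ coincides with the direct sum $\nabla^1\oplus\nabla^2$ of the factor Levi-Civita connections. For $G_+$ this is the standard product formula. For $G_-$ I would observe that the Koszul formula is invariant under an overall sign change of the metric (the inverse metric compensates the sign), so the Levi-Civita connection of $-g_2$ agrees with that of $g_2$; combined with the fact that lifts of vector fields from the two factors commute, uniqueness of the Levi-Civita connection yields $\nabla^{G_\epsilon}=\nabla^1\oplus\nabla^2$.

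Next I would invoke the classical fact that on any oriented Riemannian or Lorentzian $2$-manifold the canonical (para)complex structure $j_k$ is parallel with respect to its Levi-Civita connection --- pointwise, $j_k$ is determined by the metric and the orientation, and parallel transport in dimension two preserves oriented orthonormal (or null) frames. Since $J_2=j_1\oplus j_2$ and $J_1=j_1\oplus(-j_2)$ are direct sums of (signed) parallel operators and the ambient connection splits, one immediately gets $\nabla^{G_\epsilon}J_1=\nabla^{G_\epsilon}J_2=0$.

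For the compatibility, let $\sigma\in\{+1,-1\}$ record whether the factors are Riemannian or Lorentzian, so that $j_k^2=-\sigma\,\mathrm{Id}$ and $g_k(j_k\cdot,j_k\cdot)=\sigma\, g_k(\cdot,\cdot)$ on each $\Sigma_k$. A direct computation (the sign flip in front of $j_2$ in $J_1$ cancels when squared or when paired with itself) gives, for $k=1,2$ and $X=(X_1,X_2)$, $Y=(Y_1,Y_2)$,
\[
G_\epsilon(J_kX,J_kY)=g_1(j_1X_1,j_1Y_1)+\epsilon\, g_2(j_2X_2,j_2Y_2)=\sigma\, G_\epsilon(X,Y),
\]
while $J_k^2=-\sigma\,\mathrm{Id}$. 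Thus in the Riemannian-factor case $J_k$ is a complex structure that is isometric for $G_\epsilon$ (a K\"ahler datum), and in the Lorentzian-factor case $J_k$ is a paracomplex structure that is anti-isometric for $G_\epsilon$ (a para-K\"ahler datum). There is no genuine obstacle here; the only care required is bookkeeping the four sign cases $(\epsilon,\sigma)\in\{\pm 1\}\times\{\pm 1\}$, and checking that the $-j_2$ in $J_1$ does not spoil either the square or the compatibility identity.
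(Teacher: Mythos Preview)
The paper does not actually prove this proposition; it is quoted from \cite{Ge1} and stated without argument. Your sketch is a correct and self-contained verification: the key observations---that the Levi-Civita connection of $G_\epsilon$ splits as $\nabla^1\oplus\nabla^2$ (the sign change on $g_2$ being invisible to the Koszul formula), that each canonical $j_k$ is parallel on its own surface, and that the compatibility and square identities pass through the direct sum with the sign bookkeeping you describe---are exactly what is needed, and there is nothing to compare against in the present paper.
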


We now have the following proposition:
\begin{prop}\label{p:propforj}
The 4-manifold $(\Sigma_1\times\Sigma_2,G_{\epsilon})$ admits a parallel isometric paracomplex structure. 
\end{prop}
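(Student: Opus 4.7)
The candidate structure is the projection-reflection
\[
j(X_1,X_2):=(X_1,-X_2),\qquad (X_1,X_2)\in T\Sigma_1\oplus T\Sigma_2,
\]
under the canonical identification $T(\Sigma_1\times\Sigma_2)\simeq T\Sigma_1\oplus T\Sigma_2$. Note that $j$ does not involve $j_1$ or $j_2$ at all; it is the almost product structure whose $(+1)$- and $(-1)$-eigenspaces are $\pi_1^{-1}T\Sigma_1$ and $\pi_2^{-1}T\Sigma_2$, respectively.

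The plan is to verify directly the three required properties: (i) $j$ is an almost paracomplex structure, (ii) $j$ is $G_\epsilon$-isometric, and (iii) $j$ is parallel with respect to the Levi-Civita connection of $G_\epsilon$. For (i), observe $j^2=\mathrm{Id}$, $j\neq\mathrm{Id}$, and both eigenspaces have rank $2$. For (ii), apply $j$ to both arguments of $G_\epsilon$: the $\pi_1^\ast g_1$ term is unchanged since $j$ fixes the first factor, while the $\epsilon\,\pi_2^\ast g_2$ term picks up two minus signs which cancel, giving
\[
G_\epsilon(j\cdot,j\cdot)=G_\epsilon(\cdot,\cdot).
\]

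The key step is (iii). Because $G_\epsilon$ is a (pseudo-)product metric, its Levi-Civita connection $\nabla$ splits as the direct sum $\nabla^{g_1}\oplus\nabla^{g_2}$ (the sign $\epsilon$ drops out of the Koszul formula since it enters linearly on both sides), so horizontal and vertical distributions are preserved under parallel transport. Since $j$ is the identity on the first distribution and minus the identity on the second, and both are $\nabla$-parallel, one computes for any vector field $Y=(Y_1,Y_2)$ and any direction $X$,
\[
(\nabla_X j)Y=\nabla_X(jY)-j(\nabla_X Y)=\nabla_X(Y_1,-Y_2)-j(\nabla^{g_1}_{X_1}Y_1,\nabla^{g_2}_{X_2}Y_2)=0,
\]
so $\nabla j=0$. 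Thus $j$ is a parallel isometric paracomplex structure on $(\Sigma_1\times\Sigma_2,G_\epsilon)$.

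The main obstacle is essentially conceptual rather than computational: one must resist the temptation to use $J_1$ or $J_2$ from the preceding proposition, since those are $(+1,-1)$-structures only after mixing with $j_1,j_2$ and are really (para)complex, not paracomplex in the required way when $\epsilon$ differs from the product choice. The correct paracomplex structure is simply the one adapted to the product decomposition itself, for which the splitting of the Levi-Civita connection makes parallelism automatic.
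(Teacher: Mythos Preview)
Your proof is correct and produces the same structure as the paper up to an overall sign: the paper sets $J=J_1J_2$, which on a Riemannian product computes to $(X_1,X_2)\mapsto(-X_1,X_2)=-j$. The arguments differ in how parallelism is established. The paper leverages the cited fact that $J_1$ and $J_2$ are each $G_\epsilon$-parallel (para)K\"ahler structures and then deduces $\nabla J=0$ formally from $\nabla J_1=\nabla J_2=0$; you bypass those structures entirely and appeal directly to the splitting $\nabla=\nabla^{g_1}\oplus\nabla^{g_2}$ of the Levi-Civita connection of a (pseudo-)product metric. Your route is more elementary and self-contained---it never invokes the canonical (para)complex structures $j_1,j_2$ on the factors, nor the preceding cited proposition---while the paper's route has the advantage of displaying $J$ as the third member of the commuting triple $(J_1,J_2,J)$. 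One small correction to your closing commentary: the paper \emph{does} build its paracomplex structure out of $J_1$ and $J_2$, by composing them; what one must avoid is using either $J_1$ or $J_2$ alone, which is the point you are making.
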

\begin{proof}
We define the almost paracomplex structure $J=J_1J_2$. Then 
\[
J_1J_2(X)=(-(\pi_1)_{\ast}X),(\pi_2)_{\ast}X)=J_2J_1(X).
\]
Then, $J$ is $G_{\epsilon}$-isometric, since
\[
G_{\epsilon}(JX,JY)=G_{\epsilon}(J_1J_2X,J_1J_2Y)=G_{\epsilon}(J_2X,J_2Y)=G_{\epsilon}(X,Y).
\]
Additionally, $J$ is $G_{\epsilon}$-parallel. Indeed, if $\nabla^{\epsilon}$ is the Levi-Civita connection of $G^{\epsilon}$ then 
\[
\nabla^{\epsilon}_XJY=\nabla^{\epsilon}_XJ_1J_2Y=J_1\nabla^{\epsilon}_XJ_2Y=J_1J_2\nabla^{\epsilon}_XY=J\nabla^{\epsilon}_XY,
\]
which completes the proof.
\end{proof}
\vspace{0.1in}

Let $(\Sigma_1,g_1)$ and $(\Sigma_2,g_2)$ be Riemannian 2-manifolds and consider the metrics $G_\pm=g_1\pm g_2$ as before.

\vspace{0.1in}
\begin{prop}
The neutral metric $G_-$ is conformally flat (and scalar flat) iff the Gauss curvatures of $g_1$ and $g_2$ are equal. 

\end{prop}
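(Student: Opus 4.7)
The plan is to compute the curvature invariants of the product metric $G_-$ directly in adapted coordinates and show that the Weyl tensor vanishes precisely when $K_1=K_2$.

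First I would fix local coordinates $(x^i)_{i=1,2}$ on $\Sigma_1$ and $(x^\alpha)_{\alpha=3,4}$ on $\Sigma_2$, so that $(G_-)_{ij}=g_{1,ij}$, $(G_-)_{\alpha\beta}=-g_{2,\alpha\beta}$, with mixed components vanishing. Since the Christoffel symbols of a metric are invariant under constant rescaling, in particular multiplication by $-1$, the Levi-Civita connection of $G_-$ is the direct sum of the Levi-Civita connections of $g_1$ and $g_2$, with no cross terms.

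Next, by the product structure, the only non-vanishing components of the Riemann tensor are those with all four indices in a single factor. Since each factor is a surface with Gauss curvature $K_i$, a short computation gives
\[
R_{ijkl}=K_1\bigl((G_-)_{ik}(G_-)_{jl}-(G_-)_{il}(G_-)_{jk}\bigr),
\]
\[
R_{\alpha\beta\gamma\delta}=-K_2\bigl((G_-)_{\alpha\gamma}(G_-)_{\beta\delta}-(G_-)_{\alpha\delta}(G_-)_{\beta\gamma}\bigr),
\]
the minus sign in the second formula reflecting the sign of the second factor in $G_-$. Tracing yields $\mathrm{Ric}_{ij}=K_1(G_-)_{ij}$, $\mathrm{Ric}_{\alpha\beta}=-K_2(G_-)_{\alpha\beta}$, and scalar curvature $R=2(K_1-K_2)$. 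In particular $G_-$ is scalar flat if and only if $K_1=K_2$.

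Then I substitute these into the 4-dimensional Weyl formula
\[
W_{abcd}=R_{abcd}-\tfrac{1}{2}\bigl(G_{ac}R_{bd}-G_{ad}R_{bc}+G_{bd}R_{ac}-G_{bc}R_{ad}\bigr)+\tfrac{R}{6}\bigl(G_{ac}G_{bd}-G_{ad}G_{bc}\bigr),
\]
and split into cases according to how many indices lie in each factor. The calculation should show that every block of $W$ is a scalar multiple of $K_1-K_2$: the pure blocks satisfy $W_{ijkl}=\tfrac{K_1-K_2}{3}\bigl((G_-)_{ik}(G_-)_{jl}-(G_-)_{il}(G_-)_{jk}\bigr)$ and analogously for $W_{\alpha\beta\gamma\delta}$, while the mixed block works out to $W_{i\alpha k\gamma}=-\tfrac{K_1-K_2}{6}(G_-)_{ik}(G_-)_{\alpha\gamma}$. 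Hence $W\equiv 0$ iff $K_1=K_2$.

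The main obstacle is purely bookkeeping: the sign flips introduced by the minus on the second factor must be tracked consistently through both the lowered Riemann tensor of $-g_2$ and the tracing against $(G_-)^{\alpha\beta}=-g_2^{\alpha\beta}$. Once this is disciplined, the equivalence is immediate, and the parenthetical \emph{and scalar flat} is automatic either from the formula $R=2(K_1-K_2)$ or, more conceptually, from Main Theorem \ref{t:2} applied to the parallel isometric paracomplex structure furnished by Proposition \ref{p:propforj}.
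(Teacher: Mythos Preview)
Your proposal is correct and follows essentially the same direct-computation approach as the paper: compute the curvature of the product metric $G_-$ in adapted coordinates and read off conformal and scalar flatness. The paper condenses the computation to the identities $R(G_-)=2(\kappa_1-\kappa_2)$ and $|W(G_-)|^2=\tfrac{2}{3}(\kappa_1-\kappa_2)^2$, whereas you write out the Weyl blocks explicitly; your version has the minor advantage that it makes the ``if'' direction transparent without appealing to the norm $|W|^2$, which in neutral signature is not a priori definite.
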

\begin{proof}
Let $\kappa_1$ and $\kappa_2$ be the Gauss curvatures of $g_1$ and $g_2$, respectively. A straight-forward calculation shows that
\[
R(G_\pm)=2(\kappa_1\pm\kappa_2) \qquad |W(G_\pm)|^2={\textstyle{\frac{2}{3}
}}(\kappa_1\pm\kappa_2)^2 
\]
\[
|Ric(G_\pm)|^2=2(\kappa_1^2+\kappa_2^2) \qquad  |Ein (G_\pm)|^2=(\kappa_1\mp\kappa_2)^2. 
\]
The first statement follows directly from the first two equalities.
\end{proof}

\vspace{0.1in}

%%%%%%%%%%%%%%%%%%%%%%%%%%%%%%%%%%%%%%%%%%%%%%%%%%%%%%%%%%%%%%%%%%%%%%%%%%%%%%%%%%%%%%%%%%%
\section{Anti-isometric Almost Paracomplex Structures}\label{s:2}
%%%%%%%%%%%%%%%%%%%%%%%%%%%%%%%%%%%%%%%%%%%%%%%%%%%%%%%%%%%%%%%%%%%%%%%%%%%%%%%%%%%%%%%%%%%
An almost paracomplex structure $j$ on a 4-manifold $M$ with metric $g$  (of any signature) is said to be {\it anti-isometric} if $g(j\cdot,j\cdot)=-g(\cdot,\cdot)$. In this section anti-isometric almost paracomplex structures are geometrically characterized. The anti-isometric case of Main Theorem \ref{t:2} is also proven.

\vspace{0.1in}
\begin{prop}\label{p:aiso}
An almost paracomplex structure is anti-isometric iff its eigenspaces are totally null. Thus $g$ must have neutral signature.
\end{prop}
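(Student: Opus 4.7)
The plan is to prove both implications by reducing everything to the $\pm 1$-eigenspace decomposition $TM = E^+\oplus E^-$ determined by $j$, together with the elementary linear algebra of maximal totally null subspaces in a pseudo-Euclidean 4-space.

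For the forward direction, assume $g(j\cdot,j\cdot)=-g(\cdot,\cdot)$. First I would test the identity on pairs of vectors in $E^+$: if $X,Y\in E^+$ then $jX=X$ and $jY=Y$, so $g(X,Y)=g(jX,jY)=-g(X,Y)$, forcing $g(X,Y)=0$. The same argument with $jX=-X$, $jY=-Y$ shows $g|_{E^-\times E^-}=0$, so both eigenplanes are totally null.

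For the converse, assume $E^\pm$ are totally null and decompose arbitrary tangent vectors as $X=X^++X^-$, $Y=Y^++Y^-$ with $X^\pm,Y^\pm\in E^\pm$. Then $jX=X^+-X^-$ and the totally null hypothesis kills the $g(X^+,Y^+)$ and $g(X^-,Y^-)$ terms in both $g(X,Y)$ and $g(jX,jY)$. The surviving cross terms give $g(jX,jY)=-g(X^+,Y^-)-g(X^-,Y^+)=-g(X,Y)$, which is exactly the anti-isometric condition.

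For the signature statement, the presence of a totally null $2$-plane in a $4$-dimensional pseudo-Euclidean space $(V,g)$ of signature $(p,q)$ forces $\min(p,q)\geq 2$, since the dimension of any maximal totally null subspace equals $\min(p,q)$. Combined with $p+q=4$, this yields $p=q=2$, i.e.\ $g$ has neutral signature. I would cite this as standard linear algebra rather than reproduce the proof.

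The only mildly delicate point is the converse direction, where one must check that the bilinear identity holds on \emph{all} vectors and not just on eigenvectors; but once the decomposition $X=X^++X^-$ is in place the verification is purely algebraic, so there is no real obstacle. The argument works pointwise and is independent of the differential structure on $M$, so no computation involving connections or frames is required.
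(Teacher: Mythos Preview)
Your argument is correct and follows essentially the same approach as the paper: both work pointwise with the $\pm1$-eigenspace decomposition and use $g(jX,jY)=-g(X,Y)$ on eigenvectors to force $g|_{E^\pm\times E^\pm}=0$. Your write-up is in fact more complete, since the paper only spells out the forward implication (leaving the converse and the signature statement implicit), whereas you supply the bilinear check on arbitrary $X=X^++X^-$ for the converse and the standard $\min(p,q)\geq 2$ argument for neutrality.
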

\begin{proof}
We work on the tangent space to a fixed point $p\in M$. By assumption, $j$ is an endomorphism of $T_pM$ which has two eigenvalues, $\pm 1$, each with a 2 dimensional eigenspace. Fix an eigenbasis $(V^+_1,V^+_2,V^-_1,V^-_2)$ s.t.
\[
j(V^\pm_a)=\pm V^\pm_a \qquad\qquad{\mbox{for }} a=1,2.
\]
Clearly, an almost paracomplex structure at a point is exactly the same as the choice of a pair of transverse 2-planes (the $\pm 1$-eigenspaces) and hence the space of almost paracomplex structures at a point is 8-dimensional and can be identified with an open set in the product of real Grassmannians $Gr(2,4)\times Gr(2,4)$.

If the almost paracomplex structure is anti-isometric, projecting onto the eigenbasis we find
\[
g(V^+_1,V^+_1)=0 \qquad g(V^+_1,V^+_2)=0 \qquad g(V^+_2,V^+_2)=0 
\]
\[
g(V^-_1,V^-_1)=0 \qquad g(V^-_1,V^-_2)=0 \qquad g(V^-_2,V^-_2)=0.
\]
These are equivalent to the condition that the span of $(V^+_1,V^+_2)$ and the span of $(V^-_1,V^-_2)$ are both totally null planes, as claimed. 
\end{proof}
\vspace{0.1in}

There is a disjoint union of two $S^1$'s worth of totally null planes at a point in a neutral 4-manifold, and hence the space of anti-isometric almost paracomplex structures at a point is two dimensional.

Let $(M,g)$ be a conformally flat neutral 4-manifold with complex conformal coordinates $(Z_1,\bar{Z}_1,Z_2,\bar{Z}_2)$ so that
\[
g=\Omega^2(dZ_1d\bar{Z}_1-dZ_2d\bar{Z}_2),
\]
for some smooth non-zero function $\Omega:M\rightarrow{\mathbb R}$. A straight-forward calculation shows that 

\vspace{0.1in}
\begin{prop}\cite{CG}\label{p:cfsfuhe}
The scalar curvature of a conformally flat neutral metric $g$ is zero iff the conformal factor satisfies the ultra-hyperbolic equation:
\[
(\partial_1\bar{\partial}_1-\partial_2\bar{\partial}_2)\Omega=0.
\]
\end{prop}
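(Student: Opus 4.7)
The plan is to reduce the claim to the standard conformal transformation law for the scalar curvature in dimension four, and then to identify the resulting second-order operator on $\Omega$ with the ultrahyperbolic operator appearing in the statement.

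First I would invoke the conformal change formula for scalar curvature: if $g = \Omega^2 \eta$ on an $n$-manifold, then
\[
R_g = \Omega^{-2}R_\eta - 2(n-1)\Omega^{-3}\Box_\eta\Omega + (n-1)(4-n)\Omega^{-4}|d\Omega|^2_\eta,
\]
where $\Box_\eta$ is the Laplace--Beltrami operator of $\eta$. In dimension $n=4$ the last term vanishes identically. In the present case $\eta = dZ_1\,d\bar{Z}_1 - dZ_2\,d\bar{Z}_2$ has constant coefficients, hence is flat, so $R_\eta = 0$ and the identity reduces to
\[
R_g = -6\,\Omega^{-3}\,\Box_\eta\Omega.
\]
Since $\Omega$ is nowhere zero, $R_g = 0$ if and only if $\Box_\eta \Omega = 0$.

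The second step is to compute $\Box_\eta$ explicitly in the complex coordinates. Because $\eta$ has constant coefficients in $(Z_1,\bar{Z}_1,Z_2,\bar{Z}_2)$, its Christoffel symbols vanish, so $\Box_\eta$ reduces to $\eta^{ab}\partial_a\partial_b$. Reading off the inverse metric from $\eta$, with the symmetric-tensor convention $dZ\,d\bar{Z} = \tfrac{1}{2}(dZ\otimes d\bar{Z} + d\bar{Z}\otimes dZ)$, yields
\[
\Box_\eta\Omega = 4\bigl(\partial_1\bar{\partial}_1 - \partial_2\bar{\partial}_2\bigr)\Omega.
\]
The overall factor of $4$ is immaterial to the vanishing locus, so $R_g = 0$ is equivalent to $(\partial_1\bar{\partial}_1 - \partial_2\bar{\partial}_2)\Omega = 0$, which is the required ultrahyperbolic equation.

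The only real obstacle is bookkeeping: keeping track of sign conventions for the geometer's versus the analyst's Laplacian, the chosen normalisation for interpreting $dZ\,d\bar{Z}$ as a symmetric tensor, and the fact that $\Box_\eta$ is ultrahyperbolic rather than elliptic in neutral signature. As an alternative check, one could forgo the Yamabe-type identity entirely and instead compute the Christoffel symbols of $g = \Omega^2\eta$ directly, take the trace of the Ricci tensor, and observe the same second-order operator on $\Omega$ emerging; this is more laborious but confirms the constants without appealing to a conformal identity.
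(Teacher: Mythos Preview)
Your argument is correct. The conformal change formula you quote is right (and its derivation from the $e^{2f}$ version is a useful check: with $f=\ln\Omega$ the gradient terms combine to give exactly the $(n-1)(4-n)$ coefficient, which kills the first-order term in dimension four), and your identification of $\Box_\eta$ with $4(\partial_1\bar\partial_1-\partial_2\bar\partial_2)$ is consistent with the symmetric-tensor convention used in the paper for $dZ\,d\bar Z$.

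The paper itself does not give a proof here: it simply asserts that ``a straight-forward calculation shows'' the result and cites \cite{CG}. What is implicitly meant is the direct route you sketch at the end---write out the Christoffel symbols of $\Omega^2\eta$, contract to the Ricci tensor, and trace. Your main argument via the Yamabe-type identity is cleaner and more conceptual: it explains \emph{why} the equation is second order and linear in $\Omega$ (the special dimension-four cancellation), rather than discovering this after a page of index manipulation. Either route is acceptable; yours has the advantage of making the role of $n=4$ transparent.
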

\vspace{0.1in}

We now consider an almost paracomplex structure on a conformally flat neutral 4-manifold $(M,g)$. An almost paracomplex structure can be uniquely defined by a transverse splitting of the tangent bundle 
\[
T_p M=P \oplus \hat{P},
\]
into two 2-planes, $P$ and $\hat{P}$ which we declare to be the $+1$ and $-1$ eigenspace of $j$, respectively. We now carry out the construction of $j$ from its eigenspace in two cases: the anti-isometric and the isometric.

Assume now that $j$ is anti-isometric. As we saw, this means that $P$ and $\hat{P}$ are both totally null planes. 
\vspace{0.1in}
\begin{exem}
For fixed $\phi\in[0,2\pi)$ consider the plane $P$ in ${\mathbb R}^{2,2}$ through the origin parameterized by $w\in{\mathbb C}$ 
\[
Z_1=w \qquad\qquad Z_2=we^{i\phi}.
\]
These are totally null planes, called $\alpha-$planes, since the quadratic form $Q=Z_1\bar{Z_1}-Z_2\bar{Z}_2$ evaluates to zero on $P$. Similarly, for fixed $\phi\in[0,2\pi)$ the planes 
\[
Z_1=w \qquad\qquad Z_2=\bar{w}e^{i\phi},
\]
are also totally null planes, called $\beta-$planes.

There is a disjoint union of two circles worth of totally null planes, and every $\alpha-$plane intersects every $\beta-$plane in a null line.
\end{exem}
\vspace{0.1in}
The infinitesimal version of this example at a point $p\in M$ says that the $\alpha-$planes are generated by
\[
{\mbox{Span }}_{\mathbb C}\left( \frac{\partial}{\partial Z_1}+e^{i\phi}\frac{\partial}{\partial Z_2}\right),
\]
and the $\beta-$planes are generated by
\[
{\mbox{Span }}_{\mathbb C}\left( \frac{\partial}{\partial Z_1}+e^{i\phi}\frac{\partial}{\partial \bar{Z}_2}\right).
\]
We construct an anti-isometric paracomplex structure by choosing two transverse totally null planes. As an $\alpha$-plane is never transverse to a $\beta$-plane, we can only have either two $\alpha$-planes or two $\beta$-planes. 

Let us deal with the two cases separately.

\vspace{0.1in}
%%%%%%%%%%%%%%%%%%%%%%%%%%%%%%%%%%%%%%%%%%%%%%%%%%%%%%%%%%%%%%%%%%%%%%%%%%%%%%%%%%%%%%%%%%%%%%%%%%%%%%%%%%%%%%%%%%%%%%%%%%%%%%%%%%
\subsection{The $\alpha$-almost paracomplex structures}

For two functions $\phi_1,\phi_2:{\mathbb C}^2\rightarrow [0,2\pi)$ we seek the almost paracomplex structure $j$ satisfying
\[
j\left( \frac{\partial}{\partial Z_1}+e^{i\phi_1}\frac{\partial}{\partial {Z}_2}\right)= \frac{\partial}{\partial Z_1}+e^{i\phi_1}\frac{\partial}{\partial {Z}_2}
\]
\[
j\left( \frac{\partial}{\partial Z_1}+e^{i\phi_2}\frac{\partial}{\partial {Z}_2}\right)= -\frac{\partial}{\partial Z_1}-e^{i\phi_2}\frac{\partial}{\partial {Z}_2},
\]
which can be rearranged to
\begin{equation}\label{e:alapcs1}
j\left( \frac{\partial}{\partial Z_1}\right)= \left(1-\frac{2e^{i\phi_1}}{e^{i\phi_1}-e^{i\phi_2}}\right)\frac{\partial}{\partial Z_1}+\left(1-\frac{e^{i\phi_1}+e^{i\phi_2}}{e^{i\phi_1}-e^{i\phi_2}}\right)e^{i\phi_1}\frac{\partial}{\partial {Z}_2}
\end{equation}
\begin{equation}\label{e:alapcs2}
j\left(\frac{\partial}{\partial {Z}_2}\right)= \frac{2}{e^{i\phi_1}-e^{i\phi_2}}\frac{\partial}{\partial Z_1}+\frac{e^{i\phi_1}+e^{i\phi_2}}{e^{i\phi_1}-e^{i\phi_2}}\frac{\partial}{\partial {Z}_2}.
\end{equation}

\vspace{0.1in}
\begin{prop}
An $\alpha$ almost paracomplex structures is parallel with respect to a conformally flat metric iff
\[
\partial_1\left(\Omega e^{-i\phi_1}\right)=-\partial_2\Omega \qquad\qquad \bar{\partial}_2\left(\Omega e^{-i\phi_1}\right)=-\bar{\partial}_1\Omega
\]
\[
\partial_1\left(\Omega e^{-i\phi_2}\right)=-\partial_2\Omega \qquad\qquad \bar{\partial}_2\left(\Omega e^{-i\phi_2}\right)=-\bar{\partial}_1\Omega.
\]
As a consequence
\[
(\partial_1\bar{\partial}_1-\partial_2\bar{\partial}_2)\Omega=0,
\]
and the neutral metric must be scalar flat.
\end{prop}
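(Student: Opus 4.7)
The plan is to compute the parallel condition $\nabla j = 0$ directly in terms of the complex null vectors spanning the $\alpha$-plane eigendistributions, and then to derive the ultrahyperbolic equation as a short consequence of just two of the resulting PDEs.

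First I would reformulate: $j$ is parallel iff each eigendistribution is $\nabla$-invariant, and since the real $\pm 1$-eigenplane has complexification equal to $\mathrm{Span}_{\mathbb{C}}(V_\pm, \bar V_\pm)$ with $V_+ := \partial/\partial Z_1 + e^{i\phi_1}\partial/\partial Z_2$ and $V_- := \partial/\partial Z_1 + e^{i\phi_2}\partial/\partial Z_2$, the parallel condition amounts to
\[
\nabla_X V_+ \in \mathrm{Span}_{\mathbb{C}}(V_+, \bar V_+), \qquad \nabla_X V_- \in \mathrm{Span}_{\mathbb{C}}(V_-, \bar V_-)
\]
for every coordinate vector field $X$. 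In the basis $(\partial_1, \partial_2, \bar\partial_1, \bar\partial_2)$, a complex vector with components $(a,b,c,d)$ lies in $\mathrm{Span}_{\mathbb{C}}(V_+, \bar V_+)$ iff $b = a\, e^{i\phi_1}$ and $d = c\, e^{-i\phi_1}$, so each choice of $X$ yields at most two scalar conditions.

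Next I would plug in the conformal change formula for the Levi-Civita connection: writing $\omega = \ln\Omega$ and $g_0 := dZ_1 d\bar Z_1 - dZ_2 d\bar Z_2$ for the flat background, one has
\[
\nabla_X Y = \partial_X Y + (X\omega)Y + (Y\omega)X - g_0(X,Y)\,\mathrm{grad}_{g_0}(\omega).
\]
Applied to $Y = V_+$, the null relations $g_0(V_+, \partial_1) = g_0(V_+, \partial_2) = 0$ kill many cross-terms. For $X = \partial_1$, the relation $b = a\, e^{i\phi_1}$ reduces (after multiplying through by $\Omega$) to the first stated equation, and for $X = \bar\partial_2$ it reduces to the second. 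The remaining choices $X = \bar\partial_1$ and $X = \partial_2$ produce only the complex conjugates of these two PDEs, and in every case the $d = c\, e^{-i\phi_1}$ condition turns out to be automatically satisfied thanks to the totally null geometry of the $\alpha$-plane relative to $g_0$. Running the calculation with $V_-$ and $\phi_2$ in place of $V_+$ and $\phi_1$ delivers the remaining two equations.

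For the ultrahyperbolic consequence I would simply apply $\bar\partial_2$ to the first equation and $\partial_1$ to the second. Both identities produce the same mixed derivative $\partial_1 \bar\partial_2(\Omega e^{-i\phi_1})$ on the left, while their right-hand sides become $-\partial_2 \bar\partial_2 \Omega$ and $-\partial_1 \bar\partial_1 \Omega$ respectively. Equating gives $(\partial_1\bar\partial_1 - \partial_2\bar\partial_2)\Omega = 0$, and Proposition \ref{p:cfsfuhe} then yields scalar flatness. The main obstacle is bookkeeping in the middle step: one must verify that the eight scalar conditions arising from the four choices of $X$ on $V_+$ collapse to exactly the two stated complex PDEs, with the $(c,d)$-component conditions redundant --- this depends on careful tracking of the Christoffel symbols of $g$ and on the $g_0$-null relations satisfied by $V_\pm$ and their conjugates.
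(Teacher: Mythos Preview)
Your approach is correct and differs from the paper's. The paper proceeds by writing out the components $j^n_m$ explicitly (equations (\ref{e:alapcs1}) and (\ref{e:alapcs2})) and then imposing $\nabla_l j^n_m=0$ componentwise with the Christoffel symbols of $\Omega^2 g_0$; the four displayed PDEs emerge after rearrangement, and the ultrahyperbolic consequence is stated but not derived in the proof. You instead pass to the equivalent condition that the $\pm1$-eigendistributions are $\nabla$-parallel, work directly with the null vectors $V_\pm$, and use the conformal change formula for the connection. This is cleaner bookkeeping: the ``redundant'' $(c,d)$ conditions you flag are exactly the identity $g(\nabla_X V_+,V_+)=\tfrac12 X\,g(V_+,V_+)=0$, which holds automatically because the $\alpha$-plane is totally null, so your attribution of this redundancy to the null geometry is on the mark and could be stated in one line rather than left as a verification. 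Your cross-differentiation derivation of the ultrahyperbolic equation from two of the PDEs is more explicit than what the paper provides. The paper's route has the minor virtue of not needing the eigendistribution reformulation, but yours avoids computing the rather unwieldy components of $j$ and makes the structural reason for scalar flatness (a compatibility condition between two first-order PDEs) transparent.
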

\begin{proof}
Equations (\ref{e:alapcs1}) and (\ref{e:alapcs2}) define the components of the almost paracomplex structure $j=j_m^n$ in conformal coordinates in terms of the functions $\phi_1,\phi_2$, and $j$ being parallel with respect to the Levi-Civita connection of $g$ means
\[
\nabla_lj_m^n=\partial_lj_m^n-j_k^n\Gamma_{ml}^k+j_m^k\Gamma_{kl}^n=0.
\]
Here the gradient of the conformal factor $\Omega$ enters the equations through the Christoffel symbols $\Gamma_{mn}^l$. The resulting equations can be rearranged to the stated set.
\end{proof}
\vspace{0.1in}

%%%%%%%%%%%%%%%%%%%%%%%%%%%%%%%%%%%%%%%%%%%%%%%%%%%%%%%%%%%%%%%%%%%%%%%%%%%%%%%%%%%%%%%%%%%%%%%%%%%%%%%%%%%%%%%%%%%%%%%%%%%%%%%%%%
\subsection{The $\beta$-almost paracomplex structures}

Similarly, for two functions $\phi_1,\phi_2:{\mathbb C}^2\rightarrow [0,2\pi)$ we seek the $\alpha$ almost paracomplex structure $j$ satisfying
\[
j\left( \frac{\partial}{\partial Z_1}+e^{i\phi_1}\frac{\partial}{\partial \bar{Z}_2}\right)= \frac{\partial}{\partial Z_1}+e^{i\phi_1}\frac{\partial}{\partial \bar{Z}_2}
\]
\[
j\left( \frac{\partial}{\partial Z_1}+e^{i\phi_2}\frac{\partial}{\partial \bar{Z}_2}\right)= -\frac{\partial}{\partial Z_1}-e^{i\phi_2}\frac{\partial}{\partial \bar{Z}_2}
\]
which can be rearranged to
\[
j\left( \frac{\partial}{\partial Z_1}\right)= \left(1-\frac{2e^{i\phi_1}}{e^{i\phi_1}-e^{i\phi_2}}\right)\frac{\partial}{\partial Z_1}+\left(1-\frac{e^{i\phi_1}+e^{i\phi_2}}{e^{i\phi_1}-e^{i\phi_2}}\right)e^{i\phi_1}\frac{\partial}{\partial \bar{Z}_2}
\]
\[
j\left(\frac{\partial}{\partial \bar{Z}_2}\right)= \frac{2}{e^{i\phi_1}-e^{i\phi_2}}\frac{\partial}{\partial Z_1}+\frac{e^{i\phi_1}+e^{i\phi_2}}{e^{i\phi_1}-e^{i\phi_2}}\frac{\partial}{\partial \bar{Z}_2}.
\]
\vspace{0.1in}
\begin{prop}
A $\beta$-almost paracomplex structures is parallel with respect to a conformally flat metric iff
\[
\partial_1\left(\Omega e^{-i\phi_1}\right)=-\bar{\partial}_2\Omega \qquad\qquad {\partial}_2\left(\Omega e^{-i\phi_1}\right)=-\bar{\partial}_1\Omega
\]
\[
\partial_1\left(\Omega e^{-i\phi_2}\right)=-\bar{\partial}_2\Omega \qquad\qquad {\partial}_2\left(\Omega e^{-i\phi_2}\right)=-\bar{\partial}_1\Omega.
\]
As a consequence
\[
(\partial_1\bar{\partial}_1-\partial_2\bar{\partial}_2)\Omega=0,
\]
and the neutral metric must be scalar flat.
\end{prop}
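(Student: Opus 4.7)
The plan is to mirror the argument already carried out for the $\alpha$-almost paracomplex structures, exploiting the structural analogy: the $\beta$ case is obtained from the $\alpha$ case by replacing $\partial/\partial Z_2$ with $\partial/\partial\bar{Z}_2$ in each eigenvector generator, so the only systematic change in the parallel conditions is that $\partial_2$ and $\bar\partial_2$ swap roles in the appropriate slots. This suggests the entire proof should transcribe from the $\alpha$ calculation with that one substitution tracked carefully.

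First, I would read off the components $j_m{}^n$ of the $\beta$-almost paracomplex structure from the explicit formulas displayed just above the statement, and record the Christoffel symbols of the conformally flat neutral metric $g=\Omega^2(dZ_1\, d\bar Z_1 - dZ_2\, d\bar Z_2)$ in the standard conformal form $\Gamma^k_{ij}=\delta^k_i\,\partial_j\log\Omega + \delta^k_j\,\partial_i\log\Omega - g^{kl}g_{ij}\,\partial_l\log\Omega$. Then I would impose
\[
\nabla_l j_m{}^n = \partial_l j_m{}^n - j_k{}^n\,\Gamma^k_{ml} + j_m{}^k\,\Gamma^n_{kl} = 0
\]
component by component. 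Each resulting equation is rational in $e^{i\phi_1}-e^{i\phi_2}$; after clearing denominators, separating the terms proportional to $e^{i\phi_1}$ from those proportional to $e^{i\phi_2}$ decouples the system into one pair of conditions for $\phi_1$ and an identical pair for $\phi_2$, giving the four stated relations.

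Second, to derive the ultra-hyperbolic equation on $\Omega$, I would apply $\partial_2$ to the first equation $\partial_1(\Omega e^{-i\phi_1})=-\bar\partial_2\Omega$ and apply $\partial_1$ to the second equation $\partial_2(\Omega e^{-i\phi_1})=-\bar\partial_1\Omega$. Since partial derivatives commute, the two left-hand sides agree, so equating the right-hand sides yields $\partial_2\bar\partial_2\Omega = \partial_1\bar\partial_1\Omega$, i.e.\ $(\partial_1\bar\partial_1-\partial_2\bar\partial_2)\Omega=0$. Proposition~\ref{p:cfsfuhe} then immediately forces the scalar curvature of $g$ to vanish.

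The main obstacle will be the bookkeeping in the first step: the entries $j_m{}^n$ are fractions whose denominator is $e^{i\phi_1}-e^{i\phi_2}$, and differentiating them produces numerous cross-terms that must combine precisely for the $16$ scalar equations $\nabla_l j_m{}^n=0$ to collapse to the four clean relations in the statement. However, because the $\alpha$-version of this calculation has already been dispatched above, the $\beta$-case is essentially a routine transcription under the exchange $\partial_2\leftrightarrow\bar\partial_2$ in the second slot, and no genuinely new analytic input is required.
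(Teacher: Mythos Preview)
Your proposal is correct and follows exactly the paper's approach: the paper's proof simply reads ``The proof is identical to the $\alpha$-almost paracomplex case,'' and your outline---computing $j_m{}^n$, imposing $\nabla_l j_m{}^n=0$ with the conformal Christoffel symbols, then cross-differentiating to obtain the ultra-hyperbolic equation---is precisely that $\alpha$-calculation transcribed under the $\partial_2\leftrightarrow\bar\partial_2$ swap you identify.
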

\begin{proof}
The proof is identical to the $\alpha$-almost paracomplex case.
\end{proof}\vspace{0.1in}

This completes the proof of Main Theorem \ref{t:2} for anti-isometric paracomplex structures.
\vspace{0.1in}

%%%%%%%%%%%%%%%%%%%%%%%%%%%%%%%%%%%%%%%%%%%%%%%%%%%%%%%%%%%%%%%%%%%%%%%%%%%%%%%%%%%%%%%%
\section{Isometric Almost Paracomplex Structures}\label{s:3}
%%%%%%%%%%%%%%%%%%%%%%%%%%%%%%%%%%%%%%%%%%%%%%%%%%%%%%%%%%%%%%%%%%%%%%%%%%%%%%%%%%%%%%%%
An almost paracomplex structure $j$ on a 4-manifold $M$ with metric $g$  (of any signature) is said to be {\it isometric} if $g(j\cdot,j\cdot)=g(\cdot,\cdot)$. In this section we geometrically characterize isometric almost paracomplex structures and prove the isometric case of Main Theorem \ref{t:2}.

\vspace{0.1in}
\begin{prop}\label{p:iso}
An almost paracomplex structure is isometric iff its eigenspaces are orthogonal.
\end{prop}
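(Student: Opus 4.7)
The plan is to mirror the pointwise linear-algebra argument used in Proposition \ref{p:aiso} for the anti-isometric case, working at a fixed point $p\in M$ and decomposing the tangent space according to the eigenspace splitting of $j$. Concretely, I would choose a basis $(V_1^+,V_2^+,V_1^-,V_2^-)$ of $T_pM$ with $j(V_a^\pm)=\pm V_a^\pm$ for $a=1,2$, so that every tangent vector $X$ admits a unique decomposition $X=X^++X^-$ with $jX=X^+-X^-$. The goal is then to show that the condition $g(j\cdot,j\cdot)=g(\cdot,\cdot)$ is equivalent to the vanishing of the mixed terms $g(V_a^+,V_b^-)$ for all $a,b\in\{1,2\}$.

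For the forward implication, I would test isometry on mixed pairs: for any $X^+$ in the $+1$-eigenspace and $Y^-$ in the $-1$-eigenspace,
\[
g(X^+,Y^-)=g(jX^+,jY^-)=g(X^+,-Y^-)=-g(X^+,Y^-),
\]
forcing $g(X^+,Y^-)=0$. Hence the two eigenplanes are orthogonal.

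For the reverse implication, I would assume the eigenplanes are orthogonal and expand $g(jX,jY)$ for arbitrary $X=X^++X^-$ and $Y=Y^++Y^-$. Since $jX=X^+-X^-$ and $jY=Y^+-Y^-$, the cross terms in
\[
g(jX,jY)=g(X^+,Y^+)-g(X^+,Y^-)-g(X^-,Y^+)+g(X^-,Y^-)
\]
vanish by the orthogonality hypothesis, and the same cross terms vanish in $g(X,Y)$, yielding $g(jX,jY)=g(X,Y)$.

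There is no real obstacle here: unlike the anti-isometric case, where one had to further extract the totally null condition on each eigenplane and thereby constrain the signature of $g$, the isometric case imposes no such restriction, since orthogonal complementarity of two $2$-planes is possible for $g$ of any signature. I would simply remark at the end that the characterization works for arbitrary signature, contrasting it with Proposition \ref{p:aiso}, and that the restriction of $g$ to each eigenplane is non-degenerate whenever $g$ itself is, so the splitting is a genuine orthogonal direct sum.
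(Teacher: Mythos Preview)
Your proposal is correct and follows essentially the same pointwise eigenbasis argument as the paper: fix $p\in M$, split $T_pM$ into the $\pm1$-eigenspaces, and observe that the isometry condition reduces to the vanishing of the mixed inner products $g(V_a^+,V_b^-)$. Your write-up is in fact more complete than the paper's, since you spell out both implications and the expansion of $g(jX,jY)$ explicitly, whereas the paper simply asserts the equivalence after listing the four vanishing conditions.
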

\begin{proof}
As in the proof of Proposition \ref{p:aiso} fix a point $p\in M$ and an eigenbasis $(V^+_1,V^+_2,V^-_1,V^-_2)$ for $T_pM$ s.t.
\[
j(V^\pm_a)=\pm V^\pm_a \qquad\qquad{\mbox{for }} a=1,2.
\]
The space of almost paracomplex structures at a point is 8-dimensional and can be identified with an open set in the product of real Grassmannians $Gr(2,4)\times Gr(2,4)$.

If the almost paracomplex structure is isometric, projecting onto the eigenbasis we find
\[
g(V^+_1,V^-_1)=0 \qquad g(V^+_1,V^-_2)=0 \qquad g(V^+_2,V^-_1)=0 \qquad g(V^+_2,V^-_2)=0.
\]
These are equivalent to the condition that the span of $(V^+_1,V^+_2)$ is orthogonal to the span of $(V^-_1,V^-_2)$, as claimed. 

\end{proof}
\vspace{0.1in}

Thus an isometric almost paracomplex structure at a point is uniquely determined by a single 2-plane or a point in the 4-dimensional Grassmannian $Gr(2,4)$.

\begin{exem}
Consider a plane $P$ in ${\mathbb R}^{2,2}$ which is a graph over $Z_1$ so that, for some $\alpha,\beta\in{\mathbb C}$, it can be parameterized as
\[
Z_1=w \qquad\qquad Z_2=\alpha w+\beta\bar{w}.
\]
A simple calculation shows that the plane orthogonal to $P$ w.r.t. the neutral metric can be parameterized
\[
Z_1=w \qquad\qquad Z_2=\frac{\alpha}{\alpha\bar{\alpha}-\beta\bar{\beta}} w-\frac{\beta}{\alpha\bar{\alpha}-\beta\bar{\beta}}\bar{w}.
\]
\end{exem}
The infinitesimal version of this example at a point $p\in M$ says that the plane $P$ is generated by
\[
{\mbox{Span }}_{\mathbb C}\left( \frac{\partial}{\partial Z_1}+\alpha\frac{\partial}{\partial Z_2}+\bar{\beta}\frac{\partial}{\partial \bar{Z}_2}\right),
\]
for some $\alpha,\beta\in{\mathbb C}$, and the orthogonal plane is generated by
\[
{\mbox{Span }}_{\mathbb C}\left( \frac{\partial}{\partial Z_1}+\frac{\alpha}{\Delta_1}\frac{\partial}{\partial Z_2}-\frac{\bar{\beta}}{\Delta_1}\frac{\partial}{\partial \bar{Z}_2}\right),
\]
where for ease of notation we have introduced $\Delta_1=\alpha\bar{\alpha}-\beta\bar{\beta}$.

The almost paracomplex structure is now defined by the conditions
\[
j\left( \frac{\partial}{\partial Z_1}+\alpha\frac{\partial}{\partial Z_2}+\bar{\beta}\frac{\partial}{\partial \bar{Z}_2}\right)= \frac{\partial}{\partial Z_1}+\alpha\frac{\partial}{\partial Z_2}+\bar{\beta}\frac{\partial}{\partial \bar{Z}_2}
\]
\[
j\left( \frac{\partial}{\partial Z_1}+\frac{\alpha}{\Delta_1}\frac{\partial}{\partial Z_2}-\frac{\bar{\beta}}{\Delta_1}\frac{\partial}{\partial \bar{Z}_2}\right)=-\frac{\partial}{\partial Z_1}-\frac{\alpha}{\Delta_1}\frac{\partial}{\partial Z_2}+\frac{\bar{\beta}}{\Delta_1}\frac{\partial}{\partial \bar{Z}_2}.
\]
Introducing the notation
\[
\Delta_2=\alpha\bar{\alpha}(1-\Delta_1^{-1})^2-\beta\bar{\beta}(1+\Delta_1^{-1})^2,
\]
these imply
\begin{align*}
j\left( \frac{\partial}{\partial Z_1}\right)=&\left(1-\frac{2[\alpha\bar{\alpha}(\Delta_1-1)-\beta\bar{\beta}(\Delta_1+1)]}{\Delta_1\Delta_2}\right)\frac{\partial}{\partial Z_1}+\frac{4\alpha\bar{\beta}}{\Delta_1\Delta_2}\frac{\partial}{\partial \bar{Z}_1}\\
&+\frac{\alpha\left(1+(1+4\beta\bar{\beta}-\Delta_1^2)\right)}{\Delta_1\Delta_2}\frac{\partial}{\partial Z_2}+\frac{\alpha\left(1+(1+4\alpha\bar{\alpha}-\Delta_1^2)\right)}{\Delta_1\Delta_2}\frac{\partial}{\partial \bar{Z}_2}
\end{align*}
\begin{equation*}
j\left( \frac{\partial}{\partial Z_2}\right)=\frac{2\bar{\alpha}(\Delta_1-1)}{\Delta_1\Delta_2}\frac{\partial}{\partial Z_1}-\frac{2\beta(\Delta_1+1)}{\Delta_1\Delta_2}\frac{\partial}{\partial \bar{Z}_1}
+\frac{(\Delta_1^2-1)}{\Delta_1\Delta_2}\frac{\partial}{\partial Z_2}+\frac{4\bar{\alpha}\bar{\beta}}{\Delta_1\Delta_2}\frac{\partial}{\partial \bar{Z}_2}.
\end{equation*}

\vspace{0.1in}
\begin{prop}
An isometric almost paracomplex structure is parallel with respect to a neutral conformally flat metric iff the complex functions $\alpha$ and $\beta$ satisfy the system:
\begin{equation}\label{e:eq1}
\Omega \partial_1\alpha-\alpha\partial_1\Omega =\alpha(\alpha\partial_2\Omega+\bar{\beta}\bar{\partial}_2\Omega)
\end{equation}
\begin{equation}\label{e:eq2}
\Omega \partial_1\bar{\alpha}+\bar{\alpha}\partial_1\Omega =(\beta\bar{\beta}-1)\partial_2\Omega+\bar{\alpha}\bar{\beta}\bar{\partial}_2\Omega
\end{equation}
\begin{equation}\label{e:eq3}
\Omega \partial_2\alpha+\alpha\partial_2\Omega =(\beta\bar{\beta}-1)\partial_1\Omega+{\alpha}\bar{\beta}\bar{\partial}_1\Omega
\end{equation}
\begin{equation}\label{e:eq4}
\Omega \partial_2\bar{\alpha}-\bar{\alpha}\partial_2\Omega =\bar{\alpha}(\bar{\alpha}\partial_1\Omega+\bar{\beta}\bar{\partial}_1\Omega),
\end{equation}
and
\begin{equation}\label{e:eq5}
\Omega \partial_1\beta+\beta\partial_1\Omega =\alpha\beta\partial_2\Omega+(\alpha\bar{\alpha}-1)\bar{\partial}_2\Omega
\end{equation}
\begin{equation}\label{e:eq6}
\Omega \partial_1\bar{\beta}-\bar{\beta}\partial_1\Omega =\bar{\beta}(\alpha\partial_2\Omega+\bar{\beta}\bar{\partial}_2\Omega)
\end{equation}
\begin{equation}\label{e:eq7}
\Omega \partial_2\beta+\beta\partial_2\Omega =\bar{\alpha}\beta\partial_1\Omega+(\alpha\bar{\alpha}-1)\bar{\partial}_1\Omega
\end{equation}
\begin{equation}\label{e:eq8}
\Omega \partial_2\bar{\beta}-\bar{\beta}\partial_2\Omega =\bar{\beta}(\bar{\alpha}\partial_1\Omega+\bar{\beta}\bar{\partial}_1\Omega).
\end{equation}
\end{prop}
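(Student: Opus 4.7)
The plan is to verify $\nabla_l j_m^{\,n}=0$ coordinate-by-coordinate using the explicit matrix of $j$ written just above the proposition, and then algebraically repackage the resulting scalar constraints into the eight stated equations. First I would compute the Christoffel symbols of the conformally flat neutral metric $g=\Omega^2(dZ_1\,d\bar Z_1-dZ_2\,d\bar Z_2)$, which take the usual conformal form
\[
\Gamma_{ij}^{\,k}=\delta_i^{\,k}\partial_j\log\Omega+\delta_j^{\,k}\partial_i\log\Omega-\eta_{ij}\eta^{k\ell}\partial_\ell\log\Omega,
\]
with $\eta$ the underlying flat neutral metric in the chart $(Z_1,\bar Z_1,Z_2,\bar Z_2)$. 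Since each Christoffel symbol is linear in $\partial_a\log\Omega$, after multiplying through by $\Omega$ every resulting equation will be polynomial in $\alpha,\bar\alpha,\beta,\bar\beta$ with coefficients built from $\Omega$ and its first derivatives.

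Next I would substitute the explicit components $j_m^{\,n}$ from the two displays above the proposition into
\[
\nabla_l j_m^{\,n}=\partial_l j_m^{\,n}-j_k^{\,n}\Gamma_{ml}^{\,k}+j_m^{\,k}\Gamma_{kl}^{\,n}=0,
\]
differentiating the rational expressions in $\alpha,\beta$, collecting the derivatives $\partial_a\alpha,\partial_a\beta$ on one side, and clearing the common denominator $\Delta_1\Delta_2$. Because $j$ is isometric and squares to the identity, the $4\times 4\times 4$ scalar equations collapse: contracting with the eigenvectors of $j$ and projecting onto the $\pm 1$ eigenspaces separates the system into $+/+$, $+/-$ and $-/-$ blocks, the diagonal blocks being automatic. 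The off-diagonal block decomposes further along the two output directions $\partial/\partial Z_2$ and $\partial/\partial\bar Z_2$, producing exactly the eight first-order PDEs (\ref{e:eq1})--(\ref{e:eq8}), four governing $\alpha,\bar\alpha$ and four governing $\beta,\bar\beta$. The reverse implication follows since each reduction step is an equivalence.

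The main obstacle is algebraic bookkeeping rather than any conceptual difficulty, since the matrix of $j$ is cumbersome and the Christoffel sums generate many cancellations involving $\Delta_1=\alpha\bar\alpha-\beta\bar\beta$ and $\Delta_2=\alpha\bar\alpha(1-\Delta_1^{-1})^2-\beta\bar\beta(1+\Delta_1^{-1})^2$. A cleaner cross-check, which I would perform in parallel, is via Main Theorem \ref{t:3}: parallelism of $j$ is equivalent to the two eigenplane distributions being integrable and totally geodesic. The Frobenius condition for the plane spanned by $\partial/\partial Z_1+\alpha\,\partial/\partial Z_2+\bar\beta\,\partial/\partial\bar Z_2$ together with its conjugate, and the vanishing of its second fundamental form in the conformally flat background, can be written directly as first-order conditions on $\alpha,\beta,\Omega$. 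Matching these with the direct computation yields an independent derivation of (\ref{e:eq1})--(\ref{e:eq8}) and confirms that no equations have been lost in the reduction.
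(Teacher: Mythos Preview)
Your proposal is correct and follows essentially the same approach as the paper: the paper's own proof is the single sentence ``The proof is a direct computation identical to the anti-isometric cases and will be omitted,'' i.e.\ compute the Christoffel symbols of the conformally flat metric, substitute the explicit components of $j$ into $\nabla_l j_m^{\,n}=0$, and rearrange. Your additional cross-check via Main Theorem~\ref{t:3} is a reasonable sanity check but is not needed (and is proved later in the paper, though independently of this proposition).
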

\begin{proof}
The proof is a direct computation identical to the anti-isometric cases and will be omitted.
\end{proof}
\vspace{0.1in}
\noindent{\bf Proof of Main Theorem} \ref{t:2}:

We now show that a neutral conformally flat 4-manifold that admits an isometric parallel almost paracomplex structure is scalar flat.

Introducing functions $a,b,\phi,\theta$ defined $\alpha=ae^{i\theta}$ and $\beta=be^{i\phi}$, the parallel conditions become
\[
2\Omega\partial_1 a=(a^2+b^2-1)e^{i\theta}\partial_2\Omega+2abe^{-i\phi}\bar{\partial}_2\Omega
\]
\[
2\Omega\partial_1 b=2abe^{i\theta}\partial_2\Omega+(a^2+b^2-1)e^{-i\phi}\bar{\partial}_2\Omega
\]
\[
2\Omega\partial_2 a=(a^2+b^2-1)e^{-i\theta}\partial_1\Omega+2abe^{-i\phi}\bar{\partial}_1\Omega
\]
\[
2\Omega\partial_2 b=2abe^{-i\theta}\partial_1\Omega+(a^2+b^2-1)e^{-i\phi}\bar{\partial}_1\Omega
\]
\[
2ai\Omega\partial_1 \theta=2a\partial_1\Omega+(a^2-b^2+1)e^{i\theta}{\partial}_2\Omega
\]
\[
2bi\Omega\partial_1 \phi=-2b\partial_1\Omega+(a^2-b^2-1)e^{-i\phi}\bar{\partial}_2\Omega
\]
\[
2ai\Omega\partial_2 \theta=-(a^2-b^2+1)e^{-i\theta}\partial_1\Omega-2a{\partial}_2\Omega
\]
\[
2bi\Omega\partial_2 \phi=(a^2-b^2+1)e^{-i\phi}\bar{\partial}_1\Omega-2b{\partial}_2\Omega.
\]
The integrability conditions imposed by the last 4 equations, namely that $\partial_1\partial_2\phi-\partial_2\partial_1\phi=0$ and $\partial_1\bar{\partial}_2\theta-\bar{\partial}_2\partial_1\theta=0$ means that
\[
(\partial_1\bar{\partial}_1-\partial_2\bar{\partial}_2)\Omega=0.
\]
For the conformal factor $\Omega$ of a conformally flat metric, satisfying this equation is equivalent to the scalar curvature of the metric being zero - see Proposition \ref{p:cfsfuhe}.
\qed

\vspace{0.1in}

%%%%%%%%%%%%%%%%%%%%%%%%%%%%%%%%%%%%%%%%%%%%%%%%%%%%%%%%%%
\section{Differential Invariants of 2-plane Fields}\label{s:4}
%%%%%%%%%%%%%%%%%%%%%%%%%%%%%%%%%%%%%%%%%%%%%%%%%%%%%%%%%%%%

This section explores the first order differential invariants of 2-plane fields on a pseudo-Riemannian 4-manifold and contains the proof of Main Theorem \ref{t:3}. The treatment of null geodesic congruences in general relativity as introduced by Newman and Penrose \cite{NaP} is adapted to 2-plane fields on 4-manifolds.

Let $P$ be a 2-plane field on a pseudo-Riemannian 4-manifold $(M,g)$, a plane that arises as an eigenplane of an isometric almost paracomplex structure. From Proposition \ref{p:iso} an almost paracomplex structure is isometric iff its eigenspaces are mutually orthogonal (and transverse) and so the metric induced on $P$ must be non-degenerate. This leads to different cases depending on the signature of the induced metric and the signature of the ambient metric.

In what follows, we consider in detail the case where $g$ is either Riemannian or neutral, and the metric induced on $P$ by $g$ is definite. We call such a plane {\it definite} and note that it is a vacuous condition if the metric is Riemannian.

Let $\hat{P}$ be the 2-plane field on $M$ that is orthogonal to $P$, which will again be definite. Hatted quantities will refer to this normal 2-plane field.

Choose an orthonormal frame $\{e_\mu\}=\{e_1,e_2,e_{\hat{1}},e_{\hat{2}}\}$ so that $\{e_a\}=\{e_1,e_2\}$  spans $P$ and $\{e_{\hat{a}}\}=\{e_{\hat{1}},e_{\hat{2}}\}$ span $\hat{P}$. Let  $\{\theta^\mu\}=\{\theta^1,\theta^2,\theta^{\hat{1}},\theta^{\hat{2}}\}$ be the dual orthonormal co-frame of 1-forms.

The Levi-Civita connection $\nabla$ has coefficients associated with this frame 
\[
\Gamma_{\mu\nu\alpha}=g(\nabla_\mu\;e_\nu,e_\alpha),
\]
where $\mu,\nu,\alpha$ vary over $(1,2,\hat{1},\hat{2})$. Note that, due to orthonormality of the frames
\[
\Gamma_{\mu\nu\alpha}=-\Gamma_{\mu\alpha\nu}.
\]
Define the double null complex frame associated with $P$ and $\hat{P}$ by introducing 
\[
e_\pm={\textstyle{\frac{1}{\sqrt{2}}}}\left(e_1\mp ie_2\right) \qquad\qquad e_{\hat{\pm}}={\textstyle{\frac{1}{\sqrt{2}}}}\left(e_{\hat{1}}\mp ie_{\hat{2}}\right), 
\]
Now we allow Greek indices $\mu,\nu,...$ to range over $\{+,-,\hat{+},\hat{-}\}$ and tangent latin indices $a,b,...$ over $\{+,-\}$ and normal latin indices $\hat{a},\hat{b},...$ over $\{\hat{+},\hat{-}\}$.
\vspace{0.1in}
\begin{defi}
For a plane field $P$ the following complex connection coefficients 
\[
\lambda=\Gamma_{+\hat{+}-}-\Gamma_{-\hat{+}+}
\qquad
\rho=\Gamma_{+\hat{+}-}+\Gamma_{-\hat{+}+}
\qquad
\sigma_+=\Gamma_{+\hat{+}+}
\qquad
\sigma_-=\Gamma_{-\hat{+}-},
\]
we refer to as the {\em complex twist}, {\em complex divergence}, {\em positive complex shear} and {\em negative complex shear}, respectively.
\end{defi}
\vspace{0.1in}
The first of these has an obvious interpretation via the Frobenius' Theorem and the others also have geometric significance:
\vspace{0.1in}
\begin{lemm}
The 2-plane field $P$ is integrable iff the complex twist vanishes: $\lambda=0$.

If the complex twist vanishes, there exist surfaces in $M$ tangent to $P$ and the surfaces are minimal iff the integrable plane fields also have vanishing complex divergence $\rho=0$.

The surfaces are totally umbilic iff the integrable plane fields also have vanishing complex shears $\sigma_+=\sigma_-=0$. 

The Gauss curvature of the surfaces is $\kappa=|\rho|^2-|\sigma_+|^2-|\sigma_-|^2$.

\end{lemm}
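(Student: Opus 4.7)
The plan is to prove the four claims by direct computation in the double null frame $\{e_+,e_-,e_{\hat+},e_{\hat-}\}$, repeatedly exploiting the antisymmetry $\Gamma_{\mu\nu\alpha}=-\Gamma_{\mu\alpha\nu}$ of the Levi-Civita coefficients in an orthonormal co-frame and the reality/conjugation relations $\overline{e_\pm}=e_\mp$, $\overline{e_{\hat\pm}}=e_{\hat\mp}$.

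For the integrability claim I would invoke the Frobenius theorem: $P$ is involutive iff $[e_+,e_-]$ lies in the complexification $P^{\mathbb{C}}$. Writing $[e_+,e_-]=\nabla_{e_+}e_--\nabla_{e_-}e_+$ (torsion-freeness) and pairing with $e_{\hat+}$ using $g(e_{\hat+},e_{\hat-})=1$, the antisymmetry identity gives the $e_{\hat-}$-component of the bracket as $g([e_+,e_-],e_{\hat+})=\Gamma_{+-\hat+}-\Gamma_{-+\hat+}=-\Gamma_{+\hat+-}+\Gamma_{-\hat++}=-\lambda$, and the $e_{\hat+}$-component is $\bar\lambda$ by complex conjugation; hence integrability is exactly $\lambda=0$.

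Assuming $\lambda=0$, I would next study the second fundamental form $II(X,Y)=(\nabla_X Y)^{\perp}$ of an integral leaf, symmetric thanks to torsion-freeness. A short calculation in the complex frame, using the antisymmetry identity and the conjugation rule $\overline{\Gamma_{+\hat+-}}=\Gamma_{-\hat-+}$, yields $II(e_+,e_-)=-\tfrac{\bar\rho}{2}e_{\hat+}-\tfrac{\rho}{2}e_{\hat-}$ and $II(e_+,e_+)=-\bar\sigma_-e_{\hat+}-\sigma_+e_{\hat-}$, with $II(e_-,e_-)$ its complex conjugate. Since $g^{+-}=1$ and $g^{\pm\pm}=0$, the mean curvature vector is $H=g^{ab}II(e_a,e_b)=2\,II(e_+,e_-)$, so minimality $H=0$ is equivalent to $\rho=0$. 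Total umbilicity, $II(X,Y)=\tfrac12 g(X,Y)H$, reduces via $g(e_\pm,e_\pm)=0$ to $II(e_+,e_+)=II(e_-,e_-)=0$, giving $\sigma_+=\sigma_-=0$.

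For the Gauss curvature I would apply the Gauss equation and transcribe the extrinsic terms into the complex frame: using $g(e_{\hat+},e_{\hat-})=1$, a direct computation gives $g(II(e_+,e_+),II(e_-,e_-))=|\sigma_+|^2+|\sigma_-|^2$, while $g(II(e_+,e_-),II(e_+,e_-))$ produces a multiple of $|\rho|^2$, and these reassemble into the stated expression for $\kappa$. The step I expect to be the main obstacle is exactly this last bookkeeping: the ambient Riemann contribution $R^M(e_1,e_2,e_1,e_2)$ in the Gauss equation has to be accounted for -- either shown to vanish against terms implicit in the definitions of $\rho$ and $\sigma_\pm$, or reconciled through the intended convention -- and the $\tfrac{1}{\sqrt 2}$ factors arising from the normalisation $e_\pm=\tfrac{1}{\sqrt 2}(e_1\mp ie_2)$ must be tracked with care so that the coefficient of $|\rho|^2$ comes out as stated.
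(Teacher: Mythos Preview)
Your approach to the first three claims---integrability via Frobenius applied to $[e_+,e_-]$, minimality via the trace of the second fundamental form in the null frame, and umbilicity via the vanishing of the diagonal entries $II(e_\pm,e_\pm)$---is essentially the same as the paper's, differing only in minor sign and normalisation conventions in the second fundamental form.

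Your caution about the Gauss curvature claim is well placed, and there is nothing to compare against: the paper's own proof stops after the umbilic statement and never addresses the fourth claim. More substantively, your worry is the right one. The Gauss equation for a surface in a general pseudo-Riemannian 4-manifold reads
\[
\kappa \;=\; K_M(P)\;+\;\bigl(\text{extrinsic terms in }II\bigr),
\]
and the ambient sectional curvature $K_M(P)=R^M(e_1,e_2,e_2,e_1)$ neither vanishes in general nor is expressible through the first-order invariants $\rho,\sigma_\pm$ alone. The stated formula $\kappa=|\rho|^2-|\sigma_+|^2-|\sigma_-|^2$ is exactly the extrinsic contribution you compute; it gives the intrinsic Gauss curvature only when the ambient is flat (or when ``Gauss curvature'' is being used loosely to mean this extrinsic scalar). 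This is a gap in the statement rather than in your method, and the $1/\sqrt{2}$ bookkeeping you flag is routine once that point is settled.
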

\begin{proof}
By definition and symmetry
\[
\lambda=\Gamma_{+\hat{+}-}-\Gamma_{-\hat{+}+}=-\Gamma_{+-\hat{+}}+\Gamma_{-+\hat{+}}=g(\nabla_-\;e_+-\nabla_+\;e_-,e_{\hat{+}})
=g([e_-,e_+],e_{\hat{+}}),
\]
and similarly
\[
\bar{\lambda}=g([e_+,e_-],e_{\hat{-}}).
\]
This is zero precisely when the plane is integrable. In this case, the normal-valued second fundamental form is
\[
A(X,Y)=\begin{bmatrix}
X^+ & X^-
\end{bmatrix}
\begin{bmatrix}
\sigma_+\;e_{\hat{+}}+\bar{\sigma}_-\;e_{\hat{-}} & \rho\;e_{\hat{+}}+\bar{\rho}\;e_{\hat{-}}\\
\rho\;e_{\hat{+}}+\bar{\rho}\;e_{\hat{-}} & \sigma_-\;e_{\hat{+}}+\bar{\sigma}_+\;e_{\hat{-}} 
\end{bmatrix}\begin{bmatrix}
Y^+ \\
Y^-
\end{bmatrix}.
\]
The mean curvature vector is the trace
\[
H=\rho e_{\hat{+}}+\bar{\rho} e_{\hat{-}},
\]
and so the surfaces are minimal iff $\rho=0$.

If $\sigma_+=\sigma_-=0$ then 
\[
A(X,Y)=\left(\rho\;e_{\hat{+}}+\bar{\rho}\;e_{\hat{-}}\right)\left(X^+Y^-+X^-Y^+\right)\propto g(X,Y),
\]
as claimed.
\end{proof}
\vspace{0.1in}

Define the same invariants for the orthogonal 2-plane field $\hat{P}$ to yield
\[
\hat{\lambda}=\Gamma_{\hat{+}+\hat{-}}-\Gamma_{\hat{-}+\hat{+}}
\qquad
\hat{\rho}=\Gamma_{\hat{+}+\hat{-}}+\Gamma_{\hat{-}+\hat{+}}
\qquad
\hat{\sigma}_+=\Gamma_{\hat{+}+\hat{+}}
\qquad
\hat{\sigma}_-=\Gamma_{\hat{-}+\hat{-}}.
\]

\vspace{0.1in}
\begin{prop}
An isometric almost paracomplex structure on a Riemannian or neutral 4-manifold with definite eigenplanes, is parallel along an eigenplane iff all first order differential invariants of the eigenplane distribution vanish (the complex twist, complex divergence and both complex shears are identically zero). In particular, the eigenplane distribution is integrable.
\end{prop}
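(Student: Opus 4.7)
The plan is to work in the adapted orthonormal frame $\{e_1,e_2,e_{\hat 1},e_{\hat 2}\}$ in which $j$ has the constant matrix $\mathrm{diag}(1,1,-1,-1)$, so that $\nabla j$ is determined purely by the connection coefficients. Since $j^2=\mathrm{Id}$ forces $\nabla j$ to anticommute with $j$, its only possibly non-zero components are the off-diagonal blocks $(\nabla_{e_c}j)^{\hat b}{}_a$ and $(\nabla_{e_c}j)^b{}_{\hat a}$. A direct expansion using $j^{\hat b}{}_{\hat d}=-\delta^{\hat b}_{\hat d}$ and $j^d{}_a=\delta^d_a$ yields
\[
(\nabla_{e_c}j)^{\hat b}{}_a = 2\,\Gamma^{\hat b}{}_{ca}, \qquad (\nabla_{e_c}j)^b{}_{\hat a} = -2\,\Gamma^b{}_{c\hat a},
\]
and antisymmetry of $\Gamma_{\mu\nu\alpha}$ in its last two indices (orthonormality of the frame) makes the two vanishing conditions equivalent. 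Thus $\nabla_X j=0$ for every $X\in P$ if and only if the eight real coefficients $\Gamma_{ca\hat b}$, with $c,a\in\{1,2\}$ and $\hat b\in\{\hat 1,\hat 2\}$, vanish identically.

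Next I translate to the complex double null frame $\{e_\pm,e_{\hat\pm}\}$. Because $e_-=\overline{e_+}$ and $e_{\hat -}=\overline{e_{\hat +}}$, the complex coefficients $\Gamma_{c\hat - a}$ are the conjugates of the $\Gamma_{c\hat + a}$, so the full real parallel system is equivalent to the vanishing of just the four complex numbers
\[
\Gamma_{+\hat + +}=\sigma_+, \qquad \Gamma_{-\hat + -}=\sigma_-, \qquad \Gamma_{+\hat + -}=\tfrac{1}{2}(\rho+\lambda), \qquad \Gamma_{-\hat + +}=\tfrac{1}{2}(\rho-\lambda),
\]
where the identifications follow directly from the definitions of the invariants. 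The map from these four complex coefficients to $(\sigma_+,\sigma_-,\lambda,\rho)$ is manifestly a linear isomorphism, so the vanishing of all four invariants is equivalent to the vanishing of the eight real $\Gamma_{ca\hat b}$, i.e.\ to $\nabla_X j=0$ for all $X\in P$. The integrability of $P$ in the sense of Frobenius is then the immediate consequence $\lambda=0$ via the preceding lemma.

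The main technical obstacle is the careful bookkeeping of the change of frame: one must verify that the conjugation relation $\overline{\Gamma_{c\hat + a}}=\Gamma_{\bar c\hat - \bar a}$ (with $\bar\pm=\mp$) correctly pairs the $\hat +$- and $\hat -$-indexed equations so that the four complex invariants indeed exhaust the eight real parallel conditions without redundancy, and that the linear dictionary above is precisely the correct one. Once this matching is in place, the rest of the argument is a direct expansion of the defining identity of a parallel endomorphism in an adapted frame.
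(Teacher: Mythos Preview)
Your proposal is correct and follows essentially the same route as the paper: both arguments write $j$ in the adapted orthonormal frame, observe that the diagonal (tangent--tangent and normal--normal) blocks of $\nabla j$ vanish automatically by the antisymmetry $\Gamma_{\mu\nu\alpha}=-\Gamma_{\mu\alpha\nu}$, and then identify the surviving mixed coefficients $\Gamma_{ca\hat b}$ (equivalently $\Gamma_{c\hat b a}$) with the four complex invariants $\lambda,\rho,\sigma_\pm$. Your use of the anticommutation $j(\nabla j)+(\nabla j)j=0$ to kill the diagonal blocks, and your explicit handling of the conjugation pairing in the double null frame, are minor stylistic variations on the paper's direct component computation rather than a different strategy.
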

\begin{proof}
Given a definite 2-plane field $P$, consider the isometric almost paracomplex structure $J$ whose $+1$-eigenspace is $P$. In terms of frames, $j$ can be written
\[
j=e_a\otimes\theta^a-e_{\hat{a}}\otimes\theta^{\hat{a}},
\]
with summation over the two-dimensional indices $a,\hat{a}$.

The almost paracomplex $j$ structure is parallel along a plane field $P$ if $\nabla_X j=0$ for all $X\in P$. Thus we restrict our covariant derivative to tangent indices. 

The remaining two indices can be projected onto tangent-tangent, tangent-normal, normal-tangent and normal-normal components. The first and last automatically vanish by the computation
\[
g(\nabla_\mu j\;(e_a),e_b)=\Gamma_{\mu ab}+\Gamma_{\mu ba}=0,
\]
and
\[
g(\nabla_\mu j\;(e_{\hat{a}}),e_{\hat{b}})=\Gamma_{\mu {\hat{a}}{\hat{b}}}+\Gamma_{\mu {\hat{b}}{\hat{a}}}=0.
\]
Thus the only requirement for $j$ to be parallel along $P$ is that
\[
g(\nabla_a j\;(e_{\hat{a}}),e_b)=\Gamma_{ab\hat{a}}-\Gamma_{a\hat{a}b}=2\Gamma_{ab\hat{a}}=0,
\]
and
\[
g(\nabla_a j\;(e_b),e_{\hat{b}})=\Gamma_{a\hat{b}b}-\Gamma_{ab\hat{b}}=2\Gamma_{a\hat{b}b}=0.
\]
These are exactly the vanishing of the complex connection coefficients $\lambda,\rho,\sigma_+,\sigma_-$. 
\end{proof}

Thus the eigenplanes are integrable and the second fundamental form of the integral surfaces vanishes. 

At this point we have established Main Theorem \ref{t:3} in the case of definite planes in Riemannian or neutral 4-manifolds. The remaining cases of both  $P$ and $\hat{P}$ being Lorentz in a neutral 4-manifold, and $P$ Lorentz and $\hat{P}$ definite (or vice versa) in a Lorentz 4-manifold, can be handled similarly by defining analogous paracomplex twist, paracomplex divergence and paracomplex shears via double null paracomplex frames. The proof is then identical.

This completes the proof of Main Theorem \ref{t:3}.  

\vspace{0.1in}

%%%%%%%%%%%%%%%%%%%%%%%%%%%%%%%%%%%%%%%%%%%%%%%%%%%%%%%%%%%%%%%%%%%%%%%%%%%%%%%%%%%%%
\section{Einstein 4-manifolds and Paracomplex Structures}\label{s:6}
%%%%%%%%%%%%%%%%%%%%%%%%%%%%%%%%%%%%%%%%%%%%%%%%%%%%%%%%%%%%%%%%%%%%%%%%%%%%%%%%%%%%%
In this section we prove Main Theorem \ref{t:4}: a parallel isometric paracomplex structure provides a duality between Riemannian Einstein metrics and neutral conformally flat, scalar flat metrics on 4-manifolds. 

We start with the following proposition: 

\begin{prop}\label{p:main}
Let $(M,g)$ be a Riemannian 4-manifold and $j$ be an isometric almost paracomplex structure. Then, the metric $g'(\cdot,\cdot)=g(j\cdot,\cdot)$ is of neutral signature. If furthermore $j$ is parallel, then the corresponding Ricci tensors $Ric$ and $Ric'$ are equal.
\end{prop}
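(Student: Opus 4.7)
The plan is to proceed in three steps. First, I would verify that $g'$ is a well-defined symmetric $(0,2)$-tensor of neutral signature. Second, assuming $j$ is parallel, I would show that the Levi-Civita connection $\nabla$ of $g$ is simultaneously the Levi-Civita connection of $g'$. Third, I would deduce that the Ricci tensors coincide because the Ricci tensor depends only on the connection.

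For the first step, symmetry of $g'$ follows from the isometric property together with $j^2 = \mathrm{Id}$: indeed $g(jX,Y) = g(j^2 X, jY) = g(X, jY)$, and $g$ is symmetric. For the signature, Proposition \ref{p:iso} states that the $\pm 1$-eigenplanes $P$ and $\hat{P}$ of $j$ are mutually $g$-orthogonal. Restricting, one has $g'|_P = g|_P$, which is positive-definite since $g$ is Riemannian, while $g'|_{\hat{P}} = -g|_{\hat{P}}$, which is negative-definite. Since $TM = P \oplus \hat{P}$ orthogonally, this produces signature $(2,2)$.

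For the second step, the key identity is $(\nabla_Z g')(X,Y) = g((\nabla_Z j)X, Y)$, which one verifies by expanding $Z(g(jX,Y))$ using $\nabla g = 0$ and the Leibniz rule $\nabla_Z(jX) = (\nabla_Z j)X + j\nabla_Z X$. The hypothesis $\nabla j = 0$ therefore yields $\nabla g' = 0$. Since $\nabla$ is also torsion-free, uniqueness of the Levi-Civita connection for a pseudo-Riemannian metric forces $\nabla$ to coincide with the Levi-Civita connection $\nabla'$ of $g'$.

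The third step is then immediate: as the Riemann curvature tensor, viewed as a $(3,1)$-tensor, depends only on the connection, we have $R = R'$, and the Ricci contraction $\mathrm{Ric}(X,Y) = \mathrm{tr}(Z \mapsto R(Z,X)Y)$ is likewise determined by the connection alone, giving $\mathrm{Ric} = \mathrm{Ric}'$. The only conceptually substantive point is the identity linking $\nabla g'$ to $\nabla j$; everything else is either an application of Proposition \ref{p:iso} or routine tensor algebra. No sign subtleties from the indefinite signature of $g'$ arise, since uniqueness of the Levi-Civita connection holds for pseudo-Riemannian metrics of arbitrary signature.
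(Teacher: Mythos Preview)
Your proposal is correct and follows the same three-step route as the paper: establish the neutral signature of $g'$, show that the Levi-Civita connections of $g$ and $g'$ coincide, and conclude that the Ricci tensors agree. Your execution is in fact a bit cleaner than the paper's in two places: where the paper cites an external lemma for the equality of connections, you prove it directly via the identity $(\nabla_Z g')(X,Y) = g((\nabla_Z j)X, Y)$; and where the paper carries out an explicit frame computation of $Ric'$ using the inverse of $g'$, you simply observe that the Ricci tensor is the trace of the endomorphism $Z \mapsto R(Z,X)Y$ and is therefore determined by the connection alone, independently of any metric.
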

\begin{proof}
Since $j$ is an almost paracomplex structure on $M$, the eigenbundles $T_+M$ and $T_-M$ corresponding to the eigenvalues $+1$ and $-1$ respectively, have rank equal to 2. 

Let $\{v_1,v_2, v_3,v_4\}$ be $g$-orthonormal eigenvectors of $j$ such that
\[
jv_k=v_k,\qquad jv_{2+k}=-v_{2+k},
\]
where $k=1,2$. Then $g'(v_k,v_k)=-g'(v_{2+k},v_{2+k})=1$ and $g'(v_k,v_l)=0$ for any $k\neq l$. This means that $g'$ is of neutral signature. 

For $k=1,2$, define the following vector fields:
\[
e_k=\textstyle{\frac{1}{\sqrt{2}}}(v_k+v_{2+k}),\qquad e_{k+2}=je_k=\frac{1}{\sqrt{2}}(v_k-v_{2+k}),
\]
so that $\{e_1, e_2,e_{3}:=je_1,e_{4}:=je_2\}$ form a $g$-orthonormal frame. Thus,
\[
g'_{kl}:=g'(e_k,e_l)=0,\qquad g'_{k\;2+l}:=g'(e_k,e_{2+l})=\delta_{kl},
\]
where $k,l=1,2$.

The fact that $j$ is parallel implies that $g$ and $g'$ share the same Levi-Civita connection $\nabla$ (see Lemma 3.2 in \cite{An}).
So, the corresponding Riemann curvature tensors must be equal, i.e., $R=R'$. Hence,
\[
g'(R'(u,v)z,w)=g(R(u,v)z,jw).
\]
Letting $R(u,v,z,w):=g(R(u,v)z,w)$ and $R'(u,v,z,w):=g'(R'(u,v)z,w)$, we then have 
\[
R'(u,v,z,w)=R(u,v,z,jw).
\]
Let $Ric$ and $Ric'$ be the Ricci tensors of $g$ and $g'$, respectively. Then,
\begin{eqnarray}
Ric'(X,Y)&=&\sum_{k,l=1}^2(g')^{k\;2+l}\left(R'(X,e_k,Y,e_{2+l})+R'(X,e_{2+l},Y,e_{k})\right)\nonumber \\
&=&\sum_{k,l=1}^2\delta_{kl}(R(X,e_k,Y,e_{l})+R(X,e_{2+l},Y,e_{2+k}))\nonumber \\
&=&Ric(X,Y),\nonumber
\end{eqnarray}
which completes the proof.
\end{proof}

\vspace{0.1in}

Main Theorem \ref{t:4} says that a parallel, isometric paracomplex structure maps Einstein metrics to locally conformally flat neutral metrics and the other way round. 

\vspace{0.1in}

\noindent{\bf Proof of Main Theorem} \ref{t:4}:

Suppose that the metric $g$ is Einstein. We know from Theorem \ref{t:2} that the scalar curvature of $g'$ is 0. Then, if $W'$ is the Weyl tensor $g'$, a brief computation shows
\begin{eqnarray}
W'(x,y,z,w)&=&R(x,y,z,jw)+\textstyle{\frac{S}{8}}\left(g(x,w)g(y,jz)+g(y,z)g(x,jw)\right)\nonumber\\
&&\qquad\qquad-\textstyle{\frac{S}{8}}\left(g(x,z)g(y,jw)+g(y,w)g(x,jz)\right).\label{e:weyltensor} 
\end{eqnarray}

To prove that $g'$ is locally conformally flat we need to show that $W'$ vanishes. Using the $g$-orthonormal eigenvectors obtained in Proposition \ref{p:main}, together with (\ref{e:weyltensor}) we have
\[
W'(v_1,v_2,v_1,v_2)=R(v_1,v_2,v_1,jv_2)+\textstyle{\frac{S}{4}}=R(v_1,v_2,v_1,v_2)+\textstyle{\frac{S}{4}}.
\]
Note that $R(v_1,v_3,v_1,v_3)=0$. Indeed,
\[
R(v_1,v_3,v_1,v_3)=-R(v_1,v_3,v_1,jv_3)
=-R'(v_1,v_3,v_1,v_3)
=-R(v_1,v_3,v_1,v_3).\]
Similarly, one can show that the only nonvanishing components of the Riemann curvature tensor are:
\[
R(v_1,v_2,v_1,v_2)=R(v_3,v_4,v_3,v_4)=\textstyle{\frac{S}{4}}.
\]
Indeed,
\[
\textstyle{\frac{S}{4}}= Ric(v_1,v_1)=R(v_1,v_2,v_1,v_2).
\]
It is not hard to verify further that $R(v_3,v_4,v_3,v_4)=\textstyle{\frac{S}{4}}$.
This implies that
\[
W'(v_1,v_2,v_1,v_2)=R(v_1,v_2,v_1,v_2)-\textstyle{\frac{S}{4}}=0,
\]
and
\[
W'(v_3,v_4,v_3,v_4)=-R(v_3,v_4,v_3,v_4)+\textstyle{\frac{S}{4}}=0.
\]
The vanishing of all other components of the Weyl tensor $W'$ can be shown very easily using (\ref{e:weyltensor}). Thus $g'$ is conformally flat.

\vspace{0.1in}

Conversely, suppose that  $g'$ is conformally flat. We now show that $g$ is Einstein. Theorem \ref{t:2} tells us that $g'$ is scalar flat and Proposition \ref{p:main} yields
\[
R(x,y,z,jw)=\textstyle{\frac{1}{2}}(Ric(x,z)g(y,jw)+Ric(y,w)g(x,jz))
\]
\[
\qquad\qquad\qquad\qquad\qquad\qquad-\textstyle{\frac{1}{2}}(Ric(x,w)g(y,jz)-Ric(y,z)g(x,jw)).
\]
Using the above one can show that the only non-vanishing components of the Riemann curvature tensor are:
\[
R(v_1,v_2,v_1,v_2)=R(v_3,v_4,v_3,v_4)=\textstyle{\frac{S}{4}}.
\]
Then, it can be easily seen that
\[
Ric(v_k,v_l)=\textstyle{\frac{S}{4}}\,g(v_k,v_l)\delta_{kl}
\]
and therefore $g$ is Einstein.
\qed

\vspace{0.1in}

Note that Theorem \ref{t:4} can be extended to any signature of the metric $g$. 

Consider the product metric $G_{\epsilon}$ on $\Sigma_1\times\Sigma_2$ defined in (\ref{e:gepsilon}). We now have the following corollary:
\begin{coro}
Consider the product $\Sigma_1\times\Sigma_2$. The following statements are equivalent:
\begin{enumerate}
    \item The Gauss curvatures $k_1,k_2$ of the metrics $g_1,g_2$, respectively, are constants with $k_1=-\epsilon k_2$.
    \item The metric $G_{\epsilon}$ is locally conformally flat and scalar flat.
    \item The metric $G_{-\epsilon}$ is Einstein.
\end{enumerate}
\end{coro}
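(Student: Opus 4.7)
The plan is to reduce the three equivalences to two earlier results, using the very explicit product structure as the bridge. The key object is the parallel isometric paracomplex structure $J = J_1 J_2$ produced by Proposition \ref{p:propforj}. Its defining formula $J(X_1, X_2) = (-X_1, X_2)$ substituted into $G_\epsilon = \pi_1^\ast g_1 + \epsilon\, \pi_2^\ast g_2$ yields, after a short direct computation, the identity
\begin{equation*}
G_\epsilon(J\cdot, \cdot) = -G_{-\epsilon},
\end{equation*}
which swaps the two flavours of product metric up to a global sign. This identity is what links (2) and (3).

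I would first establish (2) $\Leftrightarrow$ (3). Applying Main Theorem \ref{t:4} (in the extension to arbitrary signature noted after its proof) to $(\Sigma_1 \times \Sigma_2, G_\epsilon, J)$ gives that $G_\epsilon(J\cdot,\cdot)$ is locally conformally flat iff $G_\epsilon$ is Einstein. Since conformal flatness is invariant under rescaling the metric by $-1$, the identity above rewrites this as ``$G_{-\epsilon}$ locally conformally flat iff $G_\epsilon$ Einstein''. Relabelling $\epsilon \leftrightarrow -\epsilon$ gives the required correspondence between conformal flatness of $G_\epsilon$ and the Einstein property of $G_{-\epsilon}$. The extra scalar-flat clause in (2) is then automatic from Main Theorem \ref{t:2}, because $G_\epsilon$ carries the parallel isometric paracomplex structure $J$.

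For (1) $\Leftrightarrow$ (3) I would carry out the one remaining pointwise computation. Since the Christoffel symbols of $g_2$ and of $-g_2$ agree, the Levi-Civita connection of $G_{-\epsilon} = g_1 - \epsilon g_2$ splits as the product of the connections of $g_1$ and $g_2$, and the Ricci tensor comes out to $Ric(G_{-\epsilon}) = \kappa_1 g_1 + \kappa_2 g_2$ (the sign flips in the inverse metric and in the lowered Riemann tensor cancel). Imposing $Ric(G_{-\epsilon}) = \lambda\, G_{-\epsilon} = \lambda(g_1 - \epsilon g_2)$ and matching factor by factor forces $\kappa_1 = \lambda$ and $\kappa_2 = -\epsilon \lambda$, equivalently $\kappa_1 = -\epsilon \kappa_2$. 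The second Bianchi identity in dimension four then forces $\lambda$, and hence both curvatures, to be constant; this delivers (1). The converse is immediate substitution.

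The step requiring the most care is the sign-and-signature bookkeeping: verifying $G_\epsilon(J\cdot,\cdot) = -G_{-\epsilon}$ precisely, and then legitimately invoking Main Theorem \ref{t:4} in the case $\epsilon = -1$, where $G_\epsilon$ is itself neutral and one must lean on the generalisation of that theorem beyond the Riemannian setting. Once these two points are pinned down, the rest of the argument is just the product Ricci calculation above combined with the applications of Main Theorems \ref{t:2} and \ref{t:4}.
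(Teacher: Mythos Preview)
Your argument for $(2)\Leftrightarrow(3)$ via the identity $G_\epsilon(J\cdot,\cdot)=-G_{-\epsilon}$ together with Main Theorem~\ref{t:4} is exactly what the paper does (the paper writes the identity without the minus sign, but your version is the correct one and the sign is immaterial for the conclusion). Where you diverge is in closing the triangle: the paper simply cites \cite{Ge1} for $(1)\Leftrightarrow(2)$, whereas you instead establish $(1)\Leftrightarrow(3)$ by the direct product Ricci computation $Ric(G_{-\epsilon})=\kappa_1 g_1+\kappa_2 g_2$ and the contracted Bianchi identity. Your route is more self-contained and makes the constancy of the Gauss curvatures transparent; the paper's route is shorter but relies on an external reference.

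One small point to tidy: Main Theorem~\ref{t:2} is stated only for neutral metrics, so invoking it for the scalar-flat clause when $\epsilon=+1$ (and $g_1,g_2$ are Riemannian, so $G_\epsilon$ is Riemannian) is not quite licit as written. You do not actually need it: once $(3)\Rightarrow(1)$ is in hand, the formula $R(G_\epsilon)=2(\kappa_1+\epsilon\kappa_2)$ from the earlier product-curvature proposition gives scalar-flatness immediately.
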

\begin{proof}
In \cite{Ge1} it is shown that the statements (1) and (2) are equivalent. Note that 
\[
G_{-\epsilon}=G_{\epsilon}(J.,.),
\]
where $J$ is the paracomplex structure defined in Proposition \ref{p:propforj}. This shows the equivalence of (2) and (3) by using Main Theorem \ref{t:4}, and the corollary follows.
\end{proof}

\vspace{0.1in}

%%%%%%%%%%%%%%%%%%%%%%%%%%%%%%%%%%%%%%%%%%%%%%%%%%%%%%%%%%%%%%%%%%%%%%%%%%%%%%%%%%%%%%%%%%%%%%%%%%%%%%%%%%%%%%%%%%%%%%%%%%%%%%%%%%%%%%%%%
\section{Closed 4-manifolds}\label{s:7}
%%%%%%%%%%%%%%%%%%%%%%%%%%%%%%%%%%%%%%%%%%%%%%%%%%%%%%%%%%%%%%%%%%%%%%%%%%%%%%%%%%%%%%%%%%%%%%%%%%%%%%%%%%%%%%%%%%%%%%%%%%%%%%%%%%%%%%%%%
This section explores topological obstructions to parallel paracomplex structures on closed 4-manifolds and contains the proofs of Main Theorems \ref{t:5} and \ref{t:6}.

For a smooth closed, oriented, connected pseudo-Riemannian 4-manifold $(M,g)$ the total integral of certain combinations of curvatures yield topological invariants of the 4-manifold, the simplest of which is the {\em Euler number} $\chi(M)$. This may be defined 
\[
\chi(M)=\sum_{n=0}^4(-1)^n{\rm dim}\;H^n(M)=2-2b_1+b_2,
\]
where $H^n(M)$ are the cohomology groups of $M$, and $b_1,b_2$  the Betti numbers ($b_0=b_4=1$ and $b_3=b_1$ by Poincar\'e duality). The Chern-Gauss-Bonnet Theorem states that
\[
\chi(M)={{\frac{\epsilon}{32\pi^2}}}\int_M|W(g)|^2-2|Ric(g)|^2+{\textstyle{\frac{2}{3}}}S^2\;d^4V_g,
\]
for any metric $g$ of definite signature ($\epsilon=1$) or neutral signature 
($\epsilon=-1$) \cite{Law}. Splitting the Weyl curvature tensor $W$ into its self-dual and anti-self-dual parts one finds that
\[
\chi(M)={{\frac{\epsilon}{32\pi^2}}}\int_M|W^+|^2+|W^-|^2-2|Ric|^2+{\textstyle{\frac{2}{3}}}S^2\;d^4V_g.
\]
The {\em signature} $\tau(M)$ of $M$ is another topological invariant based on the splitting of the intersection form on $H^2(M)$ into positive and negative eigenspaces, so that $b_2=b_++b_-$ and so
\[
\tau(M)=b_+-b_-={{\frac{1}{48\pi^2}}}\int_M|W^+|^2-|W^-|^2\;d^4V_g.
\]

There are well-known topological obstructions to the existence of a Riemannian Einstein metric on a closed 4-manifold $M$. In particular, such a 4-manifold must satisfy the Hitchin-Thorpe inequality \cite{hitch} \cite{MaL} \cite{thorpe}:
\begin{equation}\label{e:htid}
\chi(M)\geq {\scriptstyle{\frac{3}{2}}}|\tau(M)|,
\end{equation}
and in particular, $\chi\geq 0$. 

For closed 4-manifolds the existence of a neutral metric (without any curvature assumption) already implies a restriction on the topology. It is known that the existence of a metric of neutral signature on a closed 4-manifold is equivalent to the existence of a field of oriented tangent 2-planes on the manifold \cite{Mats91}. Moreover

\vspace{0.1in}
\begin{theo} \cite{HaH} \cite{kam02} \cite{Mats91}
Let $M$ be a closed 4-manifold admitting a neutral metric. Then
\begin{equation}\label{e:mats}
\chi(M)+\tau(M)=0{\mbox{ mod }}4
\qquad{\mbox and }\qquad
\chi(M)-\tau(M)=0{\mbox{ mod }}4.
\end{equation}

If $M$ is simply connected, these conditions are sufficient for the existence of a neutral metric.
\end{theo}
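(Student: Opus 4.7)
The plan rests on Matsushita's equivalence, cited in the preamble to the statement: for a closed oriented 4-manifold $M$, the existence of a neutral metric is equivalent to the existence of an oriented rank-2 subbundle $\xi\subset TM$. The problem thus becomes purely topological. I would pick any auxiliary Riemannian metric to obtain an orthogonal splitting $TM=\xi\oplus\eta$ into two oriented real rank-2 bundles, each carrying a canonical complex line bundle structure, with integer Euler classes $a=e(\xi)$ and $b=e(\eta)$ in $H^2(M;\mathbb Z)$.

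For necessity, the next step is to translate the topological invariants into $a$ and $b$. Applying the Whitney sum formula after complexification, and using multiplicativity of Euler classes, one finds $p_1(TM)=a^2+b^2$ and $e(TM)=a\smile b$. Chern--Gauss--Bonnet together with Hirzebruch's signature theorem then give
\[
\chi(M)=\langle a\smile b,[M]\rangle,\qquad 3\tau(M)=\langle a^2+b^2,[M]\rangle,
\]
so in particular $\langle (a\pm b)^2,[M]\rangle=3\tau(M)\pm 2\chi(M)$. Wu's formula for a closed oriented 4-manifold, $\langle x\smile x,[M]\rangle\equiv\langle x\smile w_2(TM),[M]\rangle\pmod 2$, combined with $w_2(TM)\equiv a+b\pmod 2$, applied to $x=a$ forces $\chi(M)$ to be even. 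Upgrading the parity information to the mod 4 statements $\chi\pm\tau\equiv 0\pmod 4$ is the delicate part: one invokes Rokhlin's theorem in the spin case ($w_2(TM)=0$ forces $\tau\equiv 0\pmod{16}$) and, in the non-spin case, exploits an integral lift of $w_2$ to refine the parity of $\langle(a\pm b)^2,[M]\rangle$.

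For sufficiency under the simply connected hypothesis, I would turn to obstruction theory. An oriented 2-plane subbundle of $TM$ is precisely a section of the associated Grassmann bundle $\widetilde{\mathrm{Gr}}_2(TM)\to M$, whose fiber is $\widetilde{\mathrm{Gr}}_2(\mathbb R^4)\cong S^2\times S^2$. The first obstruction lies in $H^3(M;\pi_2(S^2\times S^2))=H^3(M;\mathbb Z^2)$, which vanishes automatically because $H^3(M;\mathbb Z)\cong H_1(M;\mathbb Z)=0$ by Poincar\'e duality. The only remaining obstruction lives in $H^4(M;\pi_3(S^2\times S^2))\cong\mathbb Z^2$, and a Postnikov-invariant calculation identifies this pair, up to units, with $\bigl((\chi+\tau)/4,(\chi-\tau)/4\bigr)$, so the two mod-4 hypotheses are precisely what is needed to kill it.

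The hardest step is the passage from the naive mod 2 information supplied by Wu's formula to the finer mod 4 congruences: Rokhlin's theorem or an equivalent refinement is unavoidable, and one must treat the spin and non-spin cases separately. Equally demanding on the sufficiency side is the explicit identification of the top-dimensional obstruction with the integer pair $(\chi\pm\tau)/4$, which rests on the homotopy structure of $\widetilde{\mathrm{Gr}}_2(\mathbb R^4)=S^2\times S^2$ and is the technical heart of the Hirzebruch--Hopf--Matsushita result.
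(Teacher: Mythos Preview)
The paper does not prove this theorem at all: it is quoted as a known result with references \cite{HaH}, \cite{kam02}, \cite{Mats91}, and is used only as a tool in Section~\ref{s:7}. There is therefore no ``paper's own proof'' to compare against.

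That said, your sketch is broadly the correct route to the Hirzebruch--Hopf/Matsushita theorem, with one simplification worth noting. Once you have $TM=\xi\oplus\eta$ with Euler classes $a,b$ and the identities $\langle ab,[M]\rangle=\chi$, $\langle a^2+b^2,[M]\rangle=3\tau$, the passage to the mod~4 congruences need not be split into spin and non-spin cases via Rokhlin. Since $a+b$ and $a-b$ both reduce mod~2 to $w_2(TM)$, each is a characteristic element for the intersection form, and the standard lemma (van der Blij) gives $\langle c^2,[M]\rangle\equiv\tau\pmod 8$ for any characteristic $c$. Applying this to $c=a\pm b$ yields $3\tau\pm 2\chi\equiv\tau\pmod 8$, i.e.\ $\chi\pm\tau\equiv 0\pmod 4$ in one stroke. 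Your Rokhlin/integral-lift argument would eventually reach the same place, but the characteristic-element formulation is cleaner and is how the original references handle it.

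Your outline of the sufficiency direction via obstruction theory on the $\widetilde{\mathrm{Gr}}_2(\mathbb R^4)\cong S^2\times S^2$ bundle is accurate and is indeed the Hirzebruch--Hopf approach; the identification of the top obstruction with the pair $\bigl((\chi+\tau)/4,(\chi-\tau)/4\bigr)$ is exactly the computation carried out in \cite{HaH}.
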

\vspace{0.1in}

Thus, neither ${\mathbb S}^4$ nor ${\mathbb C}P^2$ admit a neutral metric, while the K3 manifold does. If the neutral metric is required to be K\"ahler with respect to a compatible complex structure, then there are even more restrictions \cite{pet}. 

Using Main Theorem \ref{t:4} we find further topological obstructions to parallel isometric paracomplex structure on conformally flat, scalar flat, neutral 4-manifolds:

\vspace{0.1in}
\noindent{\bf Proof of Main Theorem \ref{t:5}:}

Let $(M,g')$ be a closed, conformally flat, scalar flat, neutral 4-manifold and suppose $g'$ admits a parallel isometric paracomplex structure. Applying the Chern-Gauss-Bonnet Theorem to $g'$
\[
\chi(M)={{\frac{1}{16\pi^2}}}\int_M|Ric(g')|^2\;d^4V_{g'}.
\]
Thus if $|Ric(g')|^2\leq0$, then $\chi(M)\leq0$.

On the other hand, if from $g'$ we construct the Riemannian Einstein metric $g$ via Main Theorem \ref{t:4}, we have that
\begin{align}
\chi(M)&={{\frac{1}{32\pi^2}}}\int_M|W(g)|^2-2|Ric(g)|^2+{\textstyle{\frac{2}{3}}}S^2\;d^4V_g\nonumber\\
&={{\frac{1}{32\pi^2}}}\int_M|W(g)|^2+{\textstyle{\frac{1}{6}}}R^2\;d^4V_g\nonumber\\
&\geq0\nonumber.
\end{align}
Thus if $|Ric(g')|^2\leq0$, then we must have $\chi(M)=0$. It is known that an Einstein metric on a 4-manifold with $\chi(M)=0$ is flat \cite{hitch}.
This completes the proof.
\qed

\vspace{0.1in}

One also has:

\vspace{0.1in}

\noindent{\bf Proof of Main Theorem \ref{t:6}:}

Let $(M,g)$ be a closed Riemannian Einstein 4-manifold and suppose that  $g$ admits a parallel isometric paracomplex structure. 
Let $g'$ be the associated neutral metric of Main Theorem \ref{t:4}. Then
\[
\tau(M)={{\frac{1}{48\pi^2}}}\int_M|W^+(g')|^2-|W^-(g')|^2\;d^4V_{g'}=0,
\]
as claimed.
\qed

\vspace{0.1in}
The K3 manifold is a closed simply connected 4-manifold with a celebrated Riemannian metric that is Einstein (in fact Ricci flat) \cite{yau}. It is easily computed that $\chi(K3)=24$ and $\tau(K3)=16$ and so it satisfies the required relations (\ref{e:mats}) to admit a non-vanishing 2-plane field. Thus the K3 manifold admits an almost paracomplex structure $J$ that is isometric with respect to the Einstein metric. 

However, since $\tau(K3)\neq0$, Main Theorem \ref{t:6} means that the almost paracomplex structure $J$ cannot be parallel. Similarly,

\vspace{0.1in}

\begin{coro}
The 4-manifolds ${\mathbb{ C}}P^2\#k\overline{{\mathbb C}P}^2$ for $k=3,5,7$ admit Riemannian Einstein metrics and isometric almost paracomplex structures $J$, but $J$ cannot be parallel.
\end{coro}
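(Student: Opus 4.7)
The plan is to imitate the argument just given for the $K3$ manifold, verifying at each step that the three ingredients hold for $M_k:={\mathbb{C}}P^2\#k\overline{{\mathbb C}P}^2$ with $k\in\{3,5,7\}$: (i) $M_k$ admits a Riemannian Einstein metric $g$; (ii) $M_k$ admits an oriented $2$-plane field, hence an isometric almost paracomplex structure $J$ with respect to $g$; (iii) $\tau(M_k)\neq 0$, so Main Theorem \ref{t:6} forbids $J$ from being parallel.

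For step (i), I would cite the known existence of Einstein metrics on these three manifolds (LeBrun's construction for $k=3$ and the Chen--LeBrun--Weber result for $k=5,7$). For step (ii), first compute the invariants using $\chi(A\# B)=\chi(A)+\chi(B)-2$ and $\tau(A\# B)=\tau(A)+\tau(B)$, together with $\chi({\mathbb C}P^2)=\chi(\overline{{\mathbb C}P}^2)=3$, $\tau({\mathbb C}P^2)=-\tau(\overline{{\mathbb C}P}^2)=1$. This yields
\[
\chi(M_k)=3+k,\qquad \tau(M_k)=1-k,
\]
so $\chi(M_k)\pm\tau(M_k)\in\{4,8\},\{4,12\},\{4,16\}$ for $k=3,5,7$ respectively, each divisible by $4$. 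Since $M_k$ is simply connected, the theorem quoted from \cite{HaH,kam02,Mats91} supplies an oriented $2$-plane field $P\subset TM_k$. Taking the $g$-orthogonal splitting $TM_k=P\oplus P^{\perp}$ and declaring $J|_P=+\mathrm{Id}$, $J|_{P^{\perp}}=-\mathrm{Id}$ defines an almost paracomplex structure whose eigenplanes are orthogonal, hence isometric by Proposition \ref{p:iso}.

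For step (iii), simply note $\tau(M_k)=1-k\in\{-2,-4,-6\}$ is nonzero for $k=3,5,7$. If $J$ were parallel with respect to the Einstein metric $g$, Main Theorem \ref{t:6} would force $\tau(M_k)=0$, a contradiction. Hence $J$ cannot be parallel, as claimed.

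I expect no real obstacle here; the argument is a direct reprise of the $K3$ example with arithmetic replacing the computation of $\chi(K3)=24$, $\tau(K3)=16$. The only point requiring care is citation: one must ensure the Einstein metrics invoked in step (i) exist precisely for $k=3,5,7$, and that the $2$-plane field in step (ii) is genuinely provided by the simply connected hypothesis together with the divisibility conditions \eqref{e:mats}.
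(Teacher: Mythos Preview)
Your proposal is correct and follows essentially the same approach as the paper's proof: compute $\chi(M_k)=3+k$ and $\tau(M_k)=1-k$, verify the divisibility conditions \eqref{e:mats} to obtain a $2$-plane field and hence an isometric almost paracomplex structure via Proposition~\ref{p:iso}, cite the known existence of Einstein metrics on these manifolds, and then invoke Main Theorem~\ref{t:6} together with $\tau(M_k)\neq 0$. The only cosmetic difference is that the paper cites Anderson's survey \cite{And} for the Einstein metrics rather than naming the individual constructions.
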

\begin{proof}
Consider the closed 4-manifold obtained by the connected sum of complex projective 2-space ${\mathbb{ C}}P^2$ and $k$ copies of its complex conjugate $\overline{\mathbb CP}^2$:
\[
M={\mathbb{ C}}P^2\#k\overline{{\mathbb C}P}^2.
\]
A straightforward computation shows that
\[
\chi(M)=3+k \qquad \tau(M)=1-k,
\]
and the Hitchin-Thorpe inequality (\ref{e:htid}) implies that for $M$ to admit a Riemannian Einstein metric one must have $k\leq9$. In fact, the 4-manifolds ${\mathbb{ C}}P^2\#k\overline{\mathbb CP}^2$ for $0\leq k \leq 8$ are known to admit an Einstein metric \cite{And}.

On the other hand, the existence of a neutral metric is guaranteed on $M$ so long as the equations (\ref{e:mats}) hold, namely $k$ is odd, since
\[
\chi(M)+\tau(M)=4
\qquad{\mbox{ and }}\qquad
\chi(M)-\tau(M)=2+2k.
\]
Equivalently, the existence of a non-vanishing 2-plane field is guaranteed for $k$ odd, and by Proposition \ref{p:iso} this is enough to define an almost paracomplex structure $J$ isometric with respect to any given Riemannian metric. Thus, starting with an Einstein metric on $M$ one obtains an isometric almost paracomplex structure $J$.

However, for $k=3,5,7$ the signature of $M$ is non-zero, and so $J$ cannot be parallel, as this would contradict Main Theorem \ref{t:6}. 

\end{proof}

\vspace{0.1in}

\end{document}